\renewcommand\subparagraph{\@startsection {subparagraph}{5}{\z@ }{3.25ex \@plus 1ex
 \@minus .2ex}{-1em}{\normalfont \normalsize \bfseries }}%
\newcommand{\N}{\mathbb{N}}
\newcommand{\R}{\mathbb{R}}
\newcommand{\T}{\mathbb{T}} 
\renewcommand{\div}{\operatorname{div}}
\newcommand{\eg}[1][~]{\textit{e.g.#1}}
\newcommand{\ie}[1][~]{\textit{i.e.#1}}
\newtheorem{thm}{Theorem}
\newtheorem{prop}[thm]{Proposition}
\newtheorem{dfn}[thm]{Definition}
\newcommand{\proof}{\paragraph{\it Proof.}}
\newcommand{\cqfd}{\hfill\rule{1ex}{1ex}}
\newtheorem{rmk}[thm]{Remark}
\newcommand{\atom}{\mathcal{A}}
\newcommand{\norme}[2][]{\| #2 \|_{#1}}
\newcommand{\ball}[2]{\mathcal{B}(#1,#2)}
\DeclareMathOperator{\BMO}{BMO}
\title{Variation on a theme by Kiselev and Nazarov:\\
H\"older estimates for non-local transport-diffusion,\\
along a non-divergence-free BMO field.}
\author{Ioann Vasilyev\footnote{Email: \small\texttt{Ioann.vasilyev@u-pec.fr}} \: and \,
Fran\c{c}ois Vigneron\footnote{Email: \small\texttt{francois.vigneron@u-pec.fr}}\\[1ex]
\small Universit\'e Paris-Est\\ 
\small LAMA (\textsc{umr8050}), UPEC, UPEM, CNRS \\
\small 61, avenue du G\'en\'eral de Gaulle, F94010 Cr\'eteil, France.}
\begin{document}
\maketitle

\begin{abstract}
We prove uniform H\"older regularity estimates for a transport-diffusion equation with a fractional diffusion operator,
and a general advection field in BMO,
as long as the order of the diffusion dominates the transport term at small scales;
our only requirement is the smallness of the negative part of the divergence
in some critical Lebesgue space. In comparison to a celebrated result by L.~Silvestre~(2012), our advection field does not need to be bounded.
A similar result can be obtained in the super-critical case if the advection field is H\"older continuous.
Our proof is inspired by A. Kiselev and F. Nazarov~(2010) and is based on the dual evolution technique.
The idea is to propagate an atom property (\textit{i.e.} localization and integrability in Lebesgue spaces)
under the dual conservation law, when it is coupled with the fractional diffusion operator.
\\[1ex]
\textbf{Keywords:} Transport-diffusion, H\"older regularity, fractional diffusion equation, non-local operator, $\BMO$ drift, dual equation, conservation law, functional analysis, atoms.\\[1ex]
\textbf{MSC primary:} 35B65.\\
\textbf{MSC secondary:}  35R11, 35Q35.
\end{abstract}

In this article, we are interested in the following transport-diffusion equation:
\begin{equation}\label{TDalpha}
\begin{cases}
\partial_t \theta + (-\Delta)^{\alpha/2} \theta = (v \cdot \nabla) \theta\\
\theta(0,x) = \theta_0(x).
\end{cases}
\end{equation}
We consider this Cauchy problem where $\theta:[0,T]\times \R^d\to \R$ is unknown. The vector field $v$ is given and
is of bounded mean oscillation, \ie it belongs to the space $\BMO(\R^d)$. In what follows, we will \textbf{not} assume that $\div v=0$.
We will also consider the periodic problem on $\T^d$.

We are interested in the whole range of parameters $0<\alpha  < 2$ and in particular in the critical non-local diffusion (\ie $\alpha=1$)
where diffusion and transport are of similar order.
We will, in passing, consider the classical local case $\alpha=2$; however, the local case is much better
understood and  we refer the reader, for exemple, to the recent monograph~\cite{LBL2019} and the references therein.

For $\alpha>1$, the equation is called sub-critical because the drift is then of lower order than the diffusion,
which means that the diffusion will be stronger at the smallest scales. On the contrary, for $\alpha<1$,
the drift will be stronger than the diffusion at smaller scales and the equation is called super-critical.
However, as the diffusion operator remains invariant under Galilean transforms, one can still expect a mild smoothing
effect, as long as $\alpha>0$, because putative advected singularities cannot ride along a ``defect'' of the diffusion operator,
simply because the homogeneous and isotropic operator does not have any.

\bigskip
The fractional derivative $(-\Delta)^{\alpha/2}$ is the Fourier multiplier by  $|\xi|^{\alpha}$.
It admits the following kernel expansion on $\R^d$ (see~\eg\cite[\S2]{CC2004}):
\begin{equation}\label{kernelPseudoLaplace}
(-\Delta)^{\alpha/2} \theta (x) =  -c_{d,\alpha}
\operatorname{pv} \left( \int_{\R^d} \frac{\theta(y)-\theta(x)}{|y-x|^{d+\alpha}} dy \right)
\end{equation}
with $c_{d,\alpha}>0$.
As $(-\Delta)^{\alpha/2}$ is a non-local derivative of order $\alpha$ that does not induce phase-shift, it performs a sort of ``graphical'' interpolation
between the graph of $\theta$ and that of $-\Delta \theta$.

\bigskip
The goal is to establish uniform H\"older regularity estimates of the solution of~\eqref{TDalpha} at a later time~$t>0$. 
As demonstrated by A.~Kiselev and F.~Nazarov~\cite{KN2009} and M.~Dabkowski~\cite{D2010} for $\alpha=1$, an elegant but powerful technique consists in using the atomic characterization of H\"older classes (see below). Multiplying the equation by a test-function $\psi(t-s)$ where $\psi$ solves the dual evolution equation, one can then
exchange an atomic estimate of $\theta(t)\psi_0$ against an atomic estimate of $\theta_0\psi(t)$.
Provided that the dual evolution propagates the atomic property in a controlled way, one then gets the desired H\"older regularity.

\bigskip
Another powerful approach, used \eg by L.A.~Caffarelli and A.~Vasseur \cite{CV2010} and L.~Silvestre~\cite{Silv2010},
consists in using the parabolic De Giorgi method. The idea is to first establish
a set of energy estimates. Then, one uses the rigidity given by the equation to deduce scalable uniform bounds. More precisely,
one computes the energy cost of one oscillation of the solution between its maximum and minimum value. Given that the total amount
of energy available is finite, this limits the size of the oscillations that the solution can perform at a given scale, which ultimately translates
as regularity in the H\"older classes.

\bigskip
Finally, the method of the modulus of continuity was successfully introduced by A.~Kiselev, F.~Nazarov and A.~Volberg~\cite{KNV2007}
for SQG and later used by L.~Silvestre, V.~Vicol~\cite{SV2018} to study related PDEs.

\bigskip
The case of a divergence-free transport field in~\eqref{TDalpha} is of particular interest as it closely relates to the quasi-geostrophic equation
where $d=2$ and $u = \nabla^\perp (-\Delta)^{-1/2} \theta$; see~\cite{CCW2001}, \cite{CV2010}, \cite{KNV2007}.
At a technical level, the computations of~\cite{KN2009}, \cite{D2010} rely  in a crucial way on the assumption that $\div v=0$.
First, as they use the non-conservative form for the dual evolution equation
\[
\partial_s \psi + (-\Delta)^{\alpha/2} \psi = - (v(t-s) \cdot \nabla) \psi
\]
each integration by part requires a divergence-free field.
Second, they invoque the maximum-principle and the decay of the $L^p$-norms for this kind of equation, which was established
by A.~Cordoba, D.~Cordoba~\cite{CC2004}, but either for the particular vector field of SQG, or for a divergence-free transport field.

\bigskip
The key of our article is that we allow $\div v \neq 0$.
The sign of $\div v$ then becomes crucial because convergent characteristics will tend to create shocks and break down
the regularity while divergent characteristics (rarefaction waves) have a natural smoothing effect and tend to reduce steep gradients.
The presence of the diffusion operator fixes the issues of either non-existence or non-uniqueness that the pure transport would induce.
In that context, the pertinent question is thus to establish quantitative estimates of the regularity norms.

\bigskip
Our main result is the following statement that provides uniform H\"older bounds for solutions of~\eqref{TDalpha}. 
\begin{thm}\label{mainThm}
We consider either $\R^d$ or $\T^d$ as the ambiant space, which is of dimension $d>\alpha$,
and an advection field
\begin{equation}
\begin{cases}
v\in C^{1-\alpha} & \text{if}\quad 0<\alpha<1,\\
v\in\BMO & \text{if}\quad 1\leq \alpha\leq 2.
\end{cases}
\end{equation}
In both cases, one requires:
\begin{equation}\label{critical}
\norme[L^{d/\alpha}]{(\div v)_-} \leq S_{\alpha/2} = \sup_{f\in \dot{H}^{\alpha/2}}\left( \frac{ \displaystyle \int |(-\Delta)^{\alpha/4}f|^2 }{ \norme[L^{2d/(d-\alpha)}]{f}^2 } \right)
\end{equation}
where $(\div v)_-$ is the negative part of the divergence (see~\eqref{PosNegNotation} below) and $\dot{H}^{\alpha/2}$ denotes the homogeneous
Sobolev space on $\R^d$, or the average-free space on $\T^d$.
There exist $\beta>0$ and $C>0$ that depend solely on $d$, $\alpha$ and respectively either on~$\norme[\BMO]{v}$ or $\norme[C^{1-\alpha}]{v}$ such that,
for any $\theta_0\in L^q$ with $2\leq q \leq \infty$,
the corresponding solution of~\eqref{TDalpha} satisfies:
\begin{equation}\label{mainReg}
\forall t\in (0,1], \qquad
\norme[C^\beta]{\theta(t,\cdot)}
\leq C t^{-(\beta+\frac{d}{q})/\alpha}  \norme[L^q]{\theta_0}.
\end{equation}
Moreover, if $\theta_0\in C^\beta$, then:
\begin{equation}\label{mainProp}
\forall t\geq0, \qquad
\norme[C^\beta]{\theta(t,\cdot)}
\leq C' \norme[C^\beta]{\theta_0} 
\end{equation}
for some constant $C'$.
\end{thm}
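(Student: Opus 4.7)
The plan is to implement the dual evolution technique of Kiselev-Nazarov~\cite{KN2009}, adapting the argument to allow a non-zero divergence of the drift. The starting point is the atomic characterization of the H\"older class: $\norme[C^\beta]{\theta(t)}$ is comparable to the supremum of $|\langle\theta(t),\varphi\rangle|$ over suitably normalized atoms $\varphi$ (mean-zero, supported in a ball of radius $r$, with $\norme[L^\infty]{\varphi}\lesssim r^{-d-\beta}$). For such a test-function, I would introduce the dual solution $\tilde\psi(\tau,x)$ satisfying the forward conservation law
\[
\partial_\tau\tilde\psi+(-\Delta)^{\alpha/2}\tilde\psi+\div(v\tilde\psi)=0,\qquad \tilde\psi(0,\cdot)=\varphi,
\]
so that $\langle\theta(t),\varphi\rangle=\langle\theta_0,\tilde\psi(t)\rangle$. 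Crucially, the drift appearing in conservation form makes the integration by parts licit without any assumption on $\div v$; moreover, $\int\tilde\psi(\tau,\cdot)\,dx=\int\varphi\,dx=0$ is preserved since both differential operators annihilate constants in the appropriate duality. By H\"older's inequality, matters reduce to a sharp bound on $\norme[L^{q'}]{\tilde\psi(t)}$ in terms of the atom scale~$r$.

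The heart of the proof is the quantitative propagation of atom-like properties (integrability and localization) under the dual equation. For the first, I would multiply the equation by $p|\tilde\psi|^{p-2}\tilde\psi$ and integrate by parts to obtain
\[
\frac{d}{d\tau}\int|\tilde\psi|^p \leq -\tfrac{4(p-1)}{p}\int\bigl|(-\Delta)^{\alpha/4}|\tilde\psi|^{p/2}\bigr|^2 + (p-1)\int(\div v)_-\,|\tilde\psi|^p,
\]
using the Stroock-Varopoulos inequality on the diffusion term and a direct computation for the drift term. Combining H\"older with the very definition of $S_{\alpha/2}$ in~\eqref{critical}, the smallness hypothesis on $(\div v)_-$ forces the right-hand side to be non-positive (at least for moderate $p$), so that $\norme[L^p]{\tilde\psi(\tau)}$ does not grow; a Nash-type iteration then yields the ultracontractive decay. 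For the localization, I would follow~\cite{KN2009} and~\cite{D2010} and track the mass of $\tilde\psi(\tau)$ leaking outside the dilated ball of radius $r+c\tau^{1/\alpha}$, exploiting both the algebraic tail of the fractional heat kernel and a Duhamel-type expansion for the drift correction. Combined with the preserved mean-zero property (which, thanks to $\int\varphi=0$, produces an extra gain of $r^\beta$ via a H\"older-continuity estimate on the kernel of the dual flow), this should give $\norme[L^{q'}]{\tilde\psi(t)}\lesssim t^{-(\beta+d/q)/\alpha}$ for a unit atom, hence~\eqref{mainReg}. The propagation estimate~\eqref{mainProp} then follows by the same framework by splitting into the two regimes $r\geq t^{1/\alpha}$ (where the dual evolution has not yet spread $\varphi$, so the atomic pairing is preserved) and $r\leq t^{1/\alpha}$ (where~\eqref{mainReg} combined with $\theta_0\in C^\beta\subset L^\infty$ closes the argument).

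The principal obstacle is the spatial localization step. The non-locality of $(-\Delta)^{\alpha/2}$ forbids a support-propagation argument, and the merely BMO regularity of $v$ rules out any characteristics-based estimate; the pairing of the kernel tail with the drift in conservation form must therefore be handled by a careful iterated Duhamel analysis, whose closure is precisely guaranteed by the criticality assumption~\eqref{critical} and by H\"older-type estimates on the free fractional heat kernel. The super-critical regime $\alpha<1$ is still more delicate because the transport is of higher order than the diffusion at small scales; the additional hypothesis $v\in C^{1-\alpha}$ is there to restore a favorable balance between the H\"older modulus of~$v$ and the sub-diffusive spreading scale~$\tau^{1/\alpha}$, allowing a perturbative absorption of the drift term that is impossible under a mere BMO bound.
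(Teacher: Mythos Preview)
Your overall architecture is right and matches the paper: the dual conservation law, the preserved zero average, the $L^p$ energy inequality with the drift term controlled by $\norme[L^{d/\alpha}]{(\div v)_-}$ and absorbed via the Sobolev constant~\eqref{critical}. From that point on, however, your plan diverges from the paper and the divergence is not cosmetic.

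For the \emph{integrability} part, the paper does not run a Nash iteration. It combines the energy inequality $\frac{d}{ds}\norme[L^p]{\psi}^p \leq -S_{\alpha/2}\norme[L^\sigma]{\psi}^p$ (where $\sigma=dp/(d-\alpha)$) with the non-increase of $\norme[L^1]{\psi}$ and the interpolation $\norme[L^p]{}\leq\norme[L^1]{}^{1-\theta}\norme[L^\sigma]{}^{\theta}$ to obtain a closed Riccati inequality for $\norme[L^p]{\psi}^p$, which integrates explicitly to $\norme[L^p]{\psi(s)}\leq A(r^\alpha+C s)^{-\frac{d}{\alpha}(1-\frac{1}{p})}$. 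This is how the radius of the evolved atom is read off.

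For the \emph{localization}, your Duhamel/kernel-tail scheme is where the real gap lies. With $v\in\BMO$ unbounded, controlling $\int_0^t e^{-(t-s)(-\Delta)^{\alpha/2}}\div(v\psi)\,ds$ on weighted spaces is not routine, and the smallness hypothesis~\eqref{critical} plays no role in this step (it is spent entirely on the $L^p$ inequality). The paper instead tracks a moving concentration functional
\[
\chi(s)=\int |\psi(s,x)|\,\Omega(x-x(s))\,dx,\qquad \Omega(z)=|z|^\omega\wedge 1,
\]
where the center $x(s)$ solves $x'(s)=\bar{\mathbf v}_{\mathcal B(x(s),r)}$, the \emph{ball-averaged} drift (point value $\mathbf v(x(s))$ when $\alpha<1$). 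Differentiating $\chi$ and integrating by parts produces two terms: a transport term involving $(\mathbf v-\bar{\mathbf v}_{\mathcal B(x(s),r)})\cdot\nabla\Omega$, which is controlled by dyadic H\"older/BMO-oscillation estimates, and a diffusion term $\psi\,(-\Delta)^{\alpha/2}\Omega$, bounded via the kernel representation of $(-\Delta)^{\alpha/2}$. This gives $|\chi'(s)|\lesssim r^{-(\alpha-\omega)}$, hence $\chi(s)\leq (r^\alpha+Ks)^{\omega/\alpha}$. The averaged-flow recentering is precisely what makes the unbounded $\BMO$ field tractable; without it, a kernel-tail argument as you propose would have to absorb $\norme[L^\infty]{v}$, which is not available. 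For $\alpha<1$, the same computation goes through because $|v(x)-v(x(s))|\leq \norme[C^{1-\alpha}]{v}\,|x-x(s)|^{1-\alpha}$ exactly compensates the loss of one derivative in the transport term; this is where the $C^{1-\alpha}$ hypothesis is used, not in a perturbative Duhamel absorption.

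Finally, the paper's atoms are not compactly supported $L^\infty$ bumps but are defined by the triple $\norme[L^1]{}\leq 1$, $\norme[L^p]{}\leq A r^{-d(1-1/p)}$, and the concentration bound on $\chi$; this softer notion is essential because the non-local diffusion destroys compact support instantly. The propagation of the full atom property is first proved on a short interval $[0,\gamma r^\alpha]$ and then iterated globally, yielding $\psi(s)\in\bigl(\tfrac{r^\alpha}{r^\alpha+Ks}\bigr)^{\delta/K}\atom^p_{(r^\alpha+Ks)^{1/\alpha}}$. Both~\eqref{mainReg} and~\eqref{mainProp} then follow in one line from $\langle\theta(t),\psi_0\rangle=\langle\theta_0,\psi(t)\rangle$ with $\beta=\alpha\delta/K$, without any regime splitting.
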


Among the results quoted in this introduction, that of L.~Silvestre~\cite{Silv2010} is the only one that allows for non-divergence-free fields,
so we will focus on this one.
For $\alpha\geq1$, he assumes that $v\in L^\infty(\R^d)$ but requires no other size constraint on $v$; for $\alpha<1$, he assumes that
$v\in C^{1-\alpha}$ in order to compensate for the super-criticality of the equation. In both cases, 
L.~Silvestre proves the H\"older continuity of the solution of~\eqref{TDalpha} at positive times, namely that:
\[
\norme[C^\beta(\R^d)]{\theta(t)} \leq C t^{-\beta/\alpha} \norme[L^\infty(\R^d)]{\theta_0}
\]
and he also establishes a similar $C^{\beta/\alpha}$-regularity estimate time-wise.

\medskip
Our result is complimentary and does not quite compare to that of L.~Silvestre when $\alpha\geq1$ because, in that case, we do not assume
that $v$ is bounded.
Instead, we assume that $v$ belongs to $\BMO$ and we require the negative part of its divergence to be small.
However, we do not constrain the size of~$(\div v)_+$ and, in particular any BMO divergence-free field is admissible in our result.

It is likely that a ``universal'' result  holds true for a general advection field $v=v_1+v_2$ with $v_1\in L^\infty$ and $v_2\in \BMO$ with $(\div v_2)_-$ small.
Results in this spirit are known for general diffusion operators of order $\alpha=2$ (see~\cite{LBL2019}).
However, mixing the DeGiorgi and atomic methods is not obvious.

\begin{rmk}
The critical value in~\eqref{critical} can be slightly relaxed but the constants $\beta$ and $C$ will then also depend on the size of~$(\div v)_-$; see remark~\ref{relax}.  When $\alpha=2$, the dimension $d=2$ is excluded from our statement (and similarly the case $\alpha=d=1$) because the corresponding Sobolev Space $H^{\alpha/2}(\R^d)$
fails to be included in $L^\infty$.
\end{rmk}

%\medskip
Our proof is inspired by A.~Kiselev and F.~Nazarov~\cite{KN2009} and we use the dual evolution technique.
Following M.~Dabkowski~\cite{D2010}, we also use  $L^p$-based atoms for more flexibility.
In order to deal with non-divergence-free transport fields, the key is to replace
the dual evolution equation by a transport-diffusion equation expressed in a \textit{conservative} form, \ie as a conservation law.
The presence of a general exponent of diffusion $\alpha\neq 1$ also brings about some technicalities
that need to be addressed (see the end of~\S\ref{par:regularity}). 

\bigskip
There are many other takes on the question of regularity for transport-diffusion equations.
For example, D.~Bresch, P.-E.~Jabin \cite{BJ2018} have studied the regularity of weak solutions to the advection equation set in conservative form.
In particular, they investigated the case where both $v$ and $\div v$ belong to some
Lebesgue space, which allows point-wise unbounded variations of the field and of its compressibility.

Another related study, albeit slightly more distant from~\eqref{TDalpha}, is that of the kinetic Fokker-Planck equation.
On that mater, we refer the reader  to C.~Mouhot \cite{M2018} and the references therein,
and, \eg[,] to the regularity result by F.~Golse, A.~Vasseur \cite{GV2015}.

\bigskip
The role of fractional diffusion operators in physical models is growing (see \eg J.~Va\'{z}quez's book~\cite{Vaz2017}).
As for our motivations, we are interested in studying variants of a non-local Burgers equation introduced by C.~Imbert, R.~Shvydkoy
and one of the present authors in \cite{ISV2016}. The regularity of the solution for unsigned data remains an open problem that
seems to share some nontrivial similarities with hydrodynamic turbulence.

\bigskip
The present article is structured as follows.
In \S\ref{par:atoms}, we recall how H\"older classes can be characterized in terms of atoms.
We draft the general ideas on how regularity can be obtained by studying the dual conservation law in \S\ref{par:regularity}.
A weak version  (\ie in Lebesgue spaces) of the maximum principle is established in \S\ref{par:maxPrinciple}.
It is then put to use in \S\ref{par:propAtom} to quantify how the atomic property is propagated.
The proof of  theorem~\ref{mainThm} and, in particular, how the choices of structural constants should be made, is detailed in \S\ref{par:proof}.

\paragraph{Notations.}
In this article, one uses the following common notations for $a,b,x\in\R$:
\begin{gather}
a\wedge b = \min\{a,b\}, \qquad a\vee b = \max\{a,b\}\\\label{PosNegNotation}
x_+ = a \vee 0, \qquad x_- = (-x)_+, \qquad x = x_+ - x_-, \qquad |x|= x_+ + x_-.
\end{gather}
Balls are denoted by $\ball{x_0}{r}=\left\{x\in\R^d \,;\, |x-x_0|<r\right\}$ where $|\cdot|$ denotes the Euclidian distance on~$\R^d$.

\section{Atomic characterization of H\"older classes}\label{par:atoms}

Throughout the article, the constant $A\gg1$ is fixed, but is chosen arbitrarily large and $\omega\in(0,1)$.
One can check that none of the final estimates actually depend on the particular values of $A$ or $\omega$.
\begin{dfn}
For $r>0$ and $p\in (1,\infty]$, the atomic class $\atom^p_r(\R^d)$ is defined as a subset of $C^\infty(\R^d)$ by the following three conditions:
\begin{gather}
\int_{\R^d} \varphi(x) dx =0,\\
\norme[L^1]{\varphi} \leq 1 \qquad\text{and}\qquad
\norme[L^p]{\varphi} \leq A r^{-d(1-\frac{1}{p})},\\
\exists x_0\in\R^d, \qquad \int_{\R^d} |\varphi(x)| \Omega(x-x_0) dx \leq r^\omega
\end{gather}
where $\Omega(z) = |z|^\omega \wedge 1 \in L^\infty(\R^d)$.
If $\lambda^{-1} \varphi(x) \in \atom^p_r(\R^d)$ for some $\lambda>0$, then one says that 
\begin{equation}
\varphi\in \lambda\cdot \atom^p_r(\R^d).
\end{equation}
\end{dfn}

\begin{rmk}
A typical example of an atom $\atom^p_r$ of radius $r< 1$ is the function $\varphi = \varphi_r\ast \rho_\epsilon$ where $\rho_\epsilon$ is
an standard mollifier supported in $\ball{0}{\epsilon}$ and
\[
\varphi_r(x) = \begin{cases}
- C r^{-d} & \text{if } |x-x_0| \leq r 2^{-1/d}  \\
+ C r^{-d} & \text{if } r 2^{-1/d}  < |x-x_0| \leq r \\
0 & \text{if } |x-x_0|>r
\end{cases}
\]
for small enough constants $C < \min\{ |\ball{0}{1}|^{-1}; A |\ball{0}{1}|^{-1/p}; |\ball{0}{1}|/(d+\omega)\}$ and $\epsilon>0$.
The volume of the unit ball is $|\ball{0}{1}| = \pi^{d/2}/\Gamma\left(\frac{d}{2}+1\right)$.
\end{rmk}

One can control the $L^q$-norm of atoms for $q\in[1,p]$ by a real interpolation estimate.
\begin{prop}\label{AtomL1LpBound}
If $\varphi \in \atom^p_r(\R^d)$, then  for any $1\leq q\leq p$:
\begin{equation}\label{AtomLpBound}
\norme[L^q]{\varphi} \leq A^{\frac{1-1/q}{1-1/p}} r^{-d(1-\frac{1}{q})}.
\end{equation}
\end{prop}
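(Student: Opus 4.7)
The plan is to invoke the standard log-convexity of $L^q$-norms (a one-line consequence of H\"older's inequality, which can also be viewed as a special case of Riesz--Thorin) and then substitute the two normalizations provided by the atomic class.

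More precisely, for $1\leq q\leq p$, I would write $1/q=(1-\theta)/1+\theta/p$ and solve for the interpolation parameter, obtaining
\[
\theta=\frac{1-1/q}{1-1/p}\in[0,1].
\]
The interpolation inequality $\norme[L^q]{\varphi}\leq \norme[L^1]{\varphi}^{1-\theta}\norme[L^p]{\varphi}^{\theta}$ then combines with the two bounds from the definition of $\atom^p_r(\R^d)$, namely $\norme[L^1]{\varphi}\leq 1$ and $\norme[L^p]{\varphi}\leq A\,r^{-d(1-1/p)}$, to yield
\[
\norme[L^q]{\varphi}\leq \bigl(A\,r^{-d(1-1/p)}\bigr)^{\theta}=A^{\theta}\,r^{-d(1-1/p)\theta}.
\]
A trivial simplification of the exponent of $r$, using $(1-1/p)\theta=1-1/q$, gives exactly the claimed estimate $\norme[L^q]{\varphi}\leq A^{(1-1/q)/(1-1/p)}\,r^{-d(1-1/q)}$.

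There is essentially no serious obstacle: the proof is a one-liner once the correct interpolation exponent $\theta$ is identified. The only thing worth double-checking is the boundary cases $q=1$ (where $\theta=0$ and the bound reduces to $\norme[L^1]{\varphi}\leq 1$) and $q=p$ (where $\theta=1$ and one recovers the defining bound), which both hold by construction, confirming that the formula interpolates consistently between the two endpoints.
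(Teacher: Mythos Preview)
Your proof is correct and essentially identical to the paper's: both apply the interpolation inequality $\norme[L^q]{\varphi}\leq \norme[L^1]{\varphi}^{1-\theta}\norme[L^p]{\varphi}^{\theta}$ with $\theta=(1-1/q)/(1-1/p)$ and substitute the atom bounds.
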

\begin{proof}
Apply the interpolation inequality $\norme[L^q]{f}\leq \norme[L^1]{f}^{1-\theta} \norme[L^p]{f}^{\theta}$
with $\theta = (1-\frac{1}{q})/(1-\frac{1}{p}) \in [0,1]$.
\cqfd\end{proof}

\bigskip
Atoms are the ``poor man's wavelet'' and offer a very comfortable characterization of H\"older's classes.
\begin{prop}
For $0<\beta<1$, a bounded function $f$ belongs to $C^\beta(\R^d)$ if and only if
\begin{equation}\label{atomCbeta}
\sup_{\substack{0<r\leq 1\\ \varphi\in \atom^p_r(\R^d)}} r^{-\beta} \left| \int_{\R^d} f(x) \varphi(x) dx  \right| < \infty
\end{equation}
for  some $p\in(1,\infty]$.
Moreover, the left-hand side of~\eqref{atomCbeta} is equivalent to the usual semi-norm on~$C^\beta(\R^d)$.
\end{prop}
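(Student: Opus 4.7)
The plan is to prove the two implications separately, the common thread being that the atomic quantity in~\eqref{atomCbeta} is essentially equivalent to measuring the oscillation of $f$ at scale~$r$ via pairing against dyadic mollifier differences.

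\textbf{From $C^\beta$ to the atomic bound.} Given $\varphi\in\atom^p_r$ centered at some $x_0$, the vanishing moment condition lets me rewrite $\int f\varphi = \int (f(x)-f(x_0))\varphi(x)\,dx$. I then split the domain of integration into $\ball{x_0}{r}$, the annulus $\ball{x_0}{1}\setminus\ball{x_0}{r}$, and the far region $\{|x-x_0|>1\}$. On the first region I combine $|f(x)-f(x_0)|\leq[f]_{C^\beta}\,r^\beta$ with $\norme[L^1]{\varphi}\leq 1$. On the annulus I bound $|x-x_0|^\beta\leq r^{\beta-\omega}\Omega(x-x_0)$ (which requires $\omega>\beta$) and invoke the weighted atomic condition $\int|\varphi|\Omega(x-x_0)\,dx\leq r^\omega$. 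On the far region I use $|f(x)-f(x_0)|\leq 2\norme[L^\infty]{f}$ together with the observation that the mass of $|\varphi|$ outside the unit ball is still controlled by $r^\omega\leq r^\beta$. Summing the three contributions yields $|\int f\varphi|\leq C(\norme[L^\infty]{f}+[f]_{C^\beta})\,r^\beta$, and since $\omega$ may be fixed arbitrarily close to $1$, this handles any exponent $\beta\in(0,1)$.

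\textbf{From the atomic bound to $C^\beta$.} I fix a nonnegative radial mollifier $\rho\in C^\infty_c(\ball{0}{1})$ with $\int\rho=1$, set $\rho_r(z)=r^{-d}\rho(z/r)$ and $f_r:=f\ast\rho_r$. The crux is that two explicit test functions built out of $\rho_r$ are, up to a universal constant, atoms. First, for each $x\in\R^d$, the function $z\mapsto(\rho_r-\rho_{r/2})(x-z)$ has zero integral, is supported in $\ball{x}{r}$, and has $L^1$, $L^p$ and weighted norms directly controlled by those of $\rho$; it therefore lies in a fixed multiple of $\atom^p_r$. Applying~\eqref{atomCbeta} to it gives $|f_r(x)-f_{r/2}(x)|\leq Cr^\beta$, which telescopes to show that $f_{r/2^k}(x)$ is Cauchy and converges to a continuous representative $\tilde f$ of $f$, while delivering the tail estimate $|\tilde f(x)-f_r(x)|\leq Cr^\beta$. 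Second, when $|x-y|\leq r$, the function $z\mapsto \rho_r(x-z)-\rho_r(y-z)$ is similarly, up to a constant, an element of $\atom^p_{2r}$ centered at $(x+y)/2$, and pairing with $f$ yields $|f_r(x)-f_r(y)|\leq Cr^\beta$. Setting $r=|x-y|$ and inserting these into the decomposition
\[
\tilde f(x)-\tilde f(y)=[\tilde f(x)-f_r(x)]+[f_r(x)-f_r(y)]+[f_r(y)-\tilde f(y)]
\]
gives $|\tilde f(x)-\tilde f(y)|\leq C|x-y|^\beta$.

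\textbf{Main obstacle.} The delicate point is the bookkeeping for these two explicit test functions: one must verify all three atomic conditions --- in particular the weighted estimate $\int|\cdot|\Omega\leq r^\omega$ --- with constants independent of the scale $r$, and this is where the support condition (a ball of radius $r$ around the base point) is used in earnest. Once both implications are in place, taking suprema in~\eqref{atomCbeta} yields the equivalence of the atomic quantity with the usual $C^\beta$ semi-norm, all constants depending only on $d$, $p$, $\beta$, $\omega$, $A$ and $\rho$.
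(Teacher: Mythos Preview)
Your argument is correct and takes a more hands-on route than the paper's. The paper simply invokes the Littlewood--Paley characterization $f\in C^\beta \Leftrightarrow \sup_j 2^{\beta j}\norme[L^\infty]{\Delta_j f}<\infty$ and observes that the convolution kernels of the frequency projectors $\Delta_j$ are themselves (rescaled) atoms of scale $2^{-j}$; this is compact but presupposes the harmonic-analysis machinery of~\cite{STEIN1993}. You instead work directly in physical space with mollifier differences $f_r-f_{r/2}$ and spatial translations $f_r(x)-f_r(y)$, verifying by hand that the corresponding kernels satisfy all three atomic conditions. The two approaches are close cousins --- your mollifier differences are the real-variable avatar of Littlewood--Paley blocks --- but yours is fully self-contained and makes the role of each atomic constraint (zero mean, $L^1$/$L^p$ bound, weighted concentration) explicit. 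One small remark: your forward direction requires $\omega>\beta$, which you acknowledge; this restriction is in fact intrinsic and not an artifact of your method (an atom can carry mass $\sim (r/d)^\omega$ at distance $d\sim 1$ from its center, against which a $C^\beta$ function produces a pairing of order $r^\omega$, not $r^\beta$), so the proposition should really be read with $\beta<\omega$. This is harmless for the paper's application, since $\beta=\alpha\delta/K$ is chosen small in~\S\ref{par:proof}.
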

\proof The proof is a classical exercise.
See \cite{KN2009}, \cite{D2010} (resp.~for $p=\infty$ and $p<\infty$) for a short proof that relies on the Littlewood-Paley theory \cite{STEIN1993}. 
The key is the equivalent control of $\sup_{j\in\N} 2^{\beta  j}\norme[L^\infty]{\Delta_j f}$ where $\Delta_j$ is
a smooth projection on the frequency scale of order $2^j$.
\cqfd

\section{Regularity through the dual conservation law}\label{par:regularity}

As explained in the previous section, in order to obtain estimates of the H\"older regularity  of the solution of~\eqref{TDalpha} at a time $t>0$, one
needs to control
\[
r^{-\beta} \int_{\R^d} \theta(t,x) \psi_0(x) dx
\]
where $\psi_0\in \atom^p_r(\R^d)$ and this control needs to be uniform in $0<r\leq 1$. Let us consider the following test function 
that solves the dual evolution problem, set in a \textit{conservative} form:
\begin{equation}\label{dualEQ}
\begin{cases}
\partial_s \psi(s) + (-\Delta)^{\alpha/2} \psi(s) = - \div\left( v(t-s) \psi(s) \right)\\
\psi(0,x)=\psi_0(x).
\end{cases}
\end{equation}
One can then immediately check the following result.
\begin{prop}
If $\theta$ is a smooth solution of~\eqref{TDalpha} and $\psi$ is a smooth solution of~\eqref{dualEQ}, then one has:
\begin{equation}\label{transfert}
\forall t\geq0,\qquad
\int_{\R^d} \theta(t,x) \psi_0(x) dx = \int_{\R^d} \theta_0(x) \psi(t,x) dx.
\end{equation}
\end{prop}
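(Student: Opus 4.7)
The natural strategy is to introduce the time-reversed pairing
\[
I(s) := \int_{\R^d} \theta(t-s,x)\,\psi(s,x)\,dx, \qquad s\in[0,t],
\]
and show that $I'(s)\equiv 0$. Evaluating at the endpoints $s=0$ and $s=t$ then yields~\eqref{transfert} directly. Since the Proposition assumes smooth solutions (and, implicitly, sufficient decay at infinity, or the periodic setting), differentiation under the integral is legitimate.

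Using the chain rule and substituting~\eqref{TDalpha} and~\eqref{dualEQ}, one gets two groups of terms to handle, namely the diffusion contributions
\[
-\int (-\Delta)^{\alpha/2}\theta(t-s)\cdot\psi(s)\,dx \;-\; \int \theta(t-s)\cdot(-\Delta)^{\alpha/2}\psi(s)\,dx,
\]
and the transport contributions
\[
\int (v(t-s)\cdot\nabla)\theta(t-s)\cdot\psi(s)\,dx \;-\; \int \theta(t-s)\,\div\bigl(v(t-s)\psi(s)\bigr)\,dx.
\]
The first pair cancels by self-adjointness of $(-\Delta)^{\alpha/2}$: either via the Fourier multiplier $|\xi|^\alpha$ which is real, or directly from the symmetry of the kernel in~\eqref{kernelPseudoLaplace}, namely
\[
\int \psi(x)\left(\int \frac{\theta(y)-\theta(x)}{|y-x|^{d+\alpha}}\,dy\right)dx = \int\int \frac{\theta(y)\psi(x) - \theta(x)\psi(x)}{|y-x|^{d+\alpha}}\,dx\,dy
\]
is manifestly symmetric in $(\theta,\psi)$ after symmetrizing in $(x,y)$.

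The second pair cancels by a plain integration by parts: the Leibniz rule gives
\[
\div\bigl(v\,\theta\,\psi\bigr) \;=\; (v\cdot\nabla\theta)\,\psi \;+\; \theta\,\div(v\psi),
\]
and integrating over $\R^d$ (resp.\ $\T^d$) makes the left-hand side vanish, so the two transport integrals sum to zero. Crucially, this step does \emph{not} require $\div v=0$, since the divergence of $v$ is absorbed into the conservative term $\div(v\psi)$ of the dual equation~\eqref{dualEQ}; this is precisely the motivation for writing the dual problem in conservation form rather than in the non-conservative form used by~\cite{KN2009,D2010}.

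The only ``obstacle'' is the bookkeeping needed to justify the boundary terms vanish and the integrals converge absolutely. In the periodic setting there is nothing to check; on $\R^d$ smoothness together with mild decay of $\theta$, $\psi$ and $v\psi$ at infinity (which propagates from the smooth, sufficiently-decaying data under either evolution) suffices. No assumption on $v$ beyond what is needed to make the two PDEs pointwise meaningful is invoked, which is exactly what makes this duality identity the right technical vehicle for the subsequent atomic analysis.
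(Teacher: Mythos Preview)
Your approach is correct and is essentially the paper's own argument: the paper multiplies~\eqref{TDalpha} by $\psi(t-s)$ and integrates over $[0,t]\times\R^d$, which is exactly the integrated form of your pointwise identity $I'(s)\equiv 0$. There is, however, a sign slip in your displayed formulas. Since $\partial_s[\theta(t-s,x)]=-\partial_t\theta(t-s,x)$, the diffusion contribution to $I'(s)$ is
\[
+\int (-\Delta)^{\alpha/2}\theta(t-s)\cdot\psi(s)\,dx \;-\; \int \theta(t-s)\cdot(-\Delta)^{\alpha/2}\psi(s)\,dx
\]
and the transport contribution is
\[
-\int (v(t-s)\cdot\nabla)\theta(t-s)\cdot\psi(s)\,dx \;-\; \int \theta(t-s)\,\div\bigl(v(t-s)\psi(s)\bigr)\,dx.
\]
With the signs you wrote, neither pair actually cancels (each would sum to twice a single term). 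Once the first sign in each pair is corrected, your self-adjointness and Leibniz arguments go through verbatim and match the paper.
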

\proof
Let us indeed use $\psi(t-s)$ as a test function for~\eqref{TDalpha}. One gets:
\[
\iint \partial_s \theta(s) \cdot \psi(t-s) + \iint ((-\Delta)^{\alpha/2} \theta(s)) \cdot \psi(t-s) = 
\iint (v(s)\cdot \nabla) \theta(s) \cdot \psi(t-s).
\]
Here and below, all double integrals are computed for $(s,x)\in [0,t]\times\R^d$ unless stated otherwise and the $x$-variable is not made explicit
unless it is absolutely necessary. Integrating by part time-wise in the first integral and space-wise in the other two gives:
\begin{align*}
\iint \theta(s) \cdot (\partial_s\psi)&(t-s) 
+ \left[ \int_{\R^d} \theta(s) \psi(t-s) \right]_{0}^{t}\\
&+ \iint \theta(s) \cdot ((-\Delta)^{\alpha/2} \psi)(t-s) = 
- \iint  \theta(s)  \cdot \div (v(s) \psi(t-s)).
\end{align*}
Thanks to~\eqref{dualEQ}, the double integrals cancel each other out and one is left with \eqref{transfert}.
\cqfd

\begin{rmk}
In~\cite{KN2009}, the authors used the non-conservative dual form $-v(t-s)\cdot\nabla\psi(s)$. This choice was
harmless and perfectly adapted to their purpose because they assumed $v$ to be divergence-free.
Here, on the contrary, it is crucial that the right-hand side of~\eqref{dualEQ} takes the form of a conservation law.
\end{rmk}

For the sake of the argument, let us assume that one will be able to show subsequently (which is indeed the case if $\alpha=1$)
the following infinitesimal propagation property for~\eqref{dualEQ}:
\[
\psi_0\in\atom_r^p
\qquad\Longrightarrow\qquad
\forall s\in [0,\gamma r], \qquad \psi(s) \in (1-h(r) s) \atom_{r+Ks}^p
\]
for a given value of $p\in(1,\infty]$,
with universal constants $\gamma, K$ that do not depend on $r$ nor $\psi_0$ and a universal function $h$.
One can then immediately infer a global propagation property:
\[
\psi_0\in\atom_r^p
\qquad\Longrightarrow\qquad
\forall s\in \left[0, \frac{1-r}{K}\right], \qquad \psi(s) \in f_r(s) \atom_{r+Ks}^p
\]
with $f_r'(s) \geq -h(r+Ks) f_r(s)$. Let us introduce a function $H$ such that $H'(z)=h(z)$. Then
\[
f_r(s) = \exp\left( \frac{H(r)-H(r+Ks)}{K} \right)
\]
is an acceptable bound for the global propagation property.
Coupled with~\eqref{transfert}, this means the following: for any solution of~\eqref{TDalpha} and $\psi_0\in\atom^p_r$, one has
\[
\left| \int_{\R^d} \theta(t,x) \psi_0(x) dx \right| = \left| \int_{\R^d} \theta_0(x) \psi(t,x) dx \right| 
\leq  \norme[C^\beta]{\theta_0} (r+Kt)^\beta f_r(t).
\]
One is thus able to propagate $C^\beta(\R^d)$ bounds of $\theta$ if $f_r(s) \leq C ( \frac{r}{r+Ks} )^\beta$.
This is the case if, for example, $h(r) = \delta/r$ with $\delta = K\beta$.
The regularization estimate is obtained in the same fashion.

\bigskip
Dealing with a general exponent $\alpha$ requires a slightly more careful computation. 
The fundamental idea remains that the dual conservation law propagates atoms and that a small gain on the amplitude of the atoms
can be obtained as a tradeoff with a slight increase in the size of the atoms' radii.

\medskip
The main technical difficulty is that the radii now grow as $(r^\alpha+Ks)^{1/\alpha}$, which is not linear in $s$ anymore,
at least not when $s\gg r^{\alpha}$. This non-linear region invades any neighborhood of $r=s=0$ and
the corresponding correction of amplitude will be $h(r)=\delta/r^\alpha$. We found that the simplest
workaround  is to forfeit the ODE point of view presented here for $\alpha=1$ and to use direct estimates on
the corresponding rate of change during a finite increment of an Euler scheme (see \S\ref{par:propAtom}, estimate~\eqref{radiusMatch}).

\section{Weak maximum principle for the dual conservation law}\label{par:maxPrinciple}
In this section, we establish the weak maximum principle \ie the decay of the $L^p$-norms for
a non-local transport-diffusion equation written in a conservative form. In this section, one considers
thus the following general problem for $0<\alpha\leq2$:
\begin{equation}\label{TDalphaCons}
\begin{cases}
\partial_s \psi(s) + (-\Delta)^{\alpha/2} \psi(s) = - \div\left( \mathbf{v}(s) \psi(s) \right)\\
\psi(0,x)=\psi_0(x)
\end{cases}
\end{equation}
and we will assume, when necessary, that $\displaystyle \int_{\R^d} \psi_0 = 0$.
Subsequently, the results of this section will be applied to~\eqref{dualEQ} at a given time $t>0$ by choosing $\mathbf{v}(s) = v(t-s) \in\BMO(\R^d)$.

\subsection{A brief note on the well-posedness theory}

For smooth $\mathbf{v}(s,x)$, the well-posedness theory of the scalar conservation law
\[
\partial_s \psi(s) = - \div\left( \mathbf{v}(s) \psi(s) \right)
\]
was established by S.N.~Kru\v{z}kov \cite{Kru1970}, in the setting of entropy solutions.
The theory was refined and generalized to the non-conservative convective form
 by R.J.~DiPerna and P.L.~Lions \cite{DipL1989}; their theory ensures that assuming a transport field
$\mathbf{v} \in L^1([0,T];W^{1,1})$ with $(\div \mathbf{v})_-\in L^1([0,T];L^\infty)$ is enough to guarantee
the existence, uniqueness and stability in the proper function spaces.
The key idea is a celebrated commutation lemma:
\[
\rho_\varepsilon \ast ( \mathbf{v}\cdot\nabla) \psi -  \mathbf{v}\cdot\nabla(\rho_\varepsilon \ast \psi)\to 0 \qquad \text{in } L^1([0,T];L^\beta_{\text{loc}}).
\]
On $\R^d$, an additional assumption of mild growth at infinity is required, \eg $\mathbf{v}\in (1+|x|)\cdot(L^1+L^\infty)$.
To handle unbounded data, the idea is to use renormalization, \ie to consider $\Phi(\psi)$ for suitable smooth and bounded $\Phi$.
For a review of the fundamental ideas and the last developments of the theory, we refer the reader to 
the monograph~\cite{LBL2019} by C.~Le Bris and P.L.~Lions, and the references therein.
See also the lecture notes by~L. Ambrosio and D. Trevisan~\cite{AT2017} or those of C. De Lellis~\cite{DL2007}.

\bigskip
Adding the coercive diffusion term $(-\Delta)^{\alpha/2}\psi$ in~\eqref{TDalphaCons} with $0<\alpha\leq 2$ obviously does not alter
these results. %as the pseudo-derivative simply commutes with the regularization in~\cite[Thm.~II.1]{DipL1989}.
On the contrary, the assumptions on the transport field can even be relaxed.
For example, for $\alpha=2$ and even with a fully general second-order elliptic operator,
one can accept a field $\mathbf{v}\in L^2 + W^{1,1}$ with $(\div \mathbf{v})_-\in L^\infty$, as mentioned in \cite[\S3.2]{LBL2019}.

The local well-posedness of~\eqref{TDalphaCons} is thus classical; see \eg\cite{BJ2018}.

\begin{rmk}
If the transport term takes the conservative form, the equation is called a \textsl{conservation law};
if not, it is referred to as a general \textsl{convection}.
When the Laplace operator has variable coefficients, then the term \textsl{conservative} is preferred to describe the equation with
the operator written in divergence form~$-\partial_i(a_{ij}\partial_j)$, regardless of whether the transport part is a convection or a conservation law.
In our present case, however, the fractional power $(-\Delta)^\alpha$ is obviously a conservative operator so our use of the adjective
\textsl{conservative} concerns only the form of the advection term.
\end{rmk}

\subsection{Propagation of positivity}

The classical positivity result for $\alpha=2$ can be generalized for fractional diffusions.
\begin{prop}
If $\psi$ is a solution to~\eqref{TDalphaCons}, stemming from $\psi_0\geq0$, then $\psi(s)\geq0$ for any $s\geq0$.
\end{prop}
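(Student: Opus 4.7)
The strategy is to show that $s\mapsto \|\psi_-(s)\|_{L^2}^2$ is non-increasing on $[0,\infty)$. Since this quantity vanishes at $s=0$ by the positivity of $\psi_0$, it must then stay at zero for all $s\geq 0$, which is exactly the claim. Concretely, I would test the equation~\eqref{TDalphaCons} against $\Phi'(\psi)=-\psi_-$, where $\Phi(u)=\tfrac12 u_-^2$ is a convex $C^{1,1}$ renormalization, to obtain
\begin{equation*}
\frac{d}{ds}\,\tfrac12\|\psi_-\|_{L^2}^2 \;=\; \int_{\R^d}\psi_-\,(-\Delta)^{\alpha/2}\psi\,dx \;+\; \int_{\R^d}\psi_-\,\div(\mathbf{v}\psi)\,dx.
\end{equation*}

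The diffusion term is handled in the spirit of C\'ordoba--C\'ordoba~\cite{CC2004}. Writing $\psi=\psi_+-\psi_-$ and reading off from the principal-value formula~\eqref{kernelPseudoLaplace} that $(-\Delta)^{\alpha/2}\psi_-(x)\leq 0$ at any $x$ where $\psi_-(x)=0$, one gets $\int \psi_+\,(-\Delta)^{\alpha/2}\psi_-\leq 0$; self-adjointness of the fractional Laplacian then produces
\begin{equation*}
\int_{\R^d}\psi_-\,(-\Delta)^{\alpha/2}\psi\,dx \;\leq\; -\,\|(-\Delta)^{\alpha/4}\psi_-\|_{L^2}^2.
\end{equation*}
The drift term is treated by integration by parts: using the a.e.\ identities $\psi_+\nabla\psi_-=\psi_-\nabla\psi_+=0$, it collapses to a pure divergence contribution,
\begin{equation*}
\int_{\R^d}\psi_-\,\div(\mathbf{v}\psi)\,dx \;=\; -\tfrac12\int_{\R^d}\div(\mathbf{v})\,\psi_-^2\,dx \;\leq\; \tfrac12\int_{\R^d}(\div\mathbf{v})_-\,\psi_-^2\,dx.
\end{equation*}

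To absorb this remainder into the dissipation, I would apply H\"older's inequality with exponents $d/\alpha$ and $d/(d-\alpha)$ followed by the Sobolev embedding $\dot H^{\alpha/2}\hookrightarrow L^{2d/(d-\alpha)}$ encoded in the definition of $S_{\alpha/2}$, and then invoke the critical smallness assumption~\eqref{critical} on $(\div\mathbf{v})_-$. This yields $\tfrac12\int(\div\mathbf{v})_-\,\psi_-^2 \leq \tfrac12\|(-\Delta)^{\alpha/4}\psi_-\|_{L^2}^2$, so assembling everything produces
\begin{equation*}
\frac{d}{ds}\|\psi_-\|_{L^2}^2 \;\leq\; -\,\|(-\Delta)^{\alpha/4}\psi_-\|_{L^2}^2 \;\leq\; 0,
\end{equation*}
and Gr\"onwall's lemma finishes the argument.

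The main technical difficulty is the rigorous justification of this chain of manipulations: the renormalization $u\mapsto u_-^2$ is only Lipschitz, the transport field $\mathbf{v}$ is merely $\BMO$ (so that $\div\,\mathbf{v}$ is a priori just a distribution, with only its negative part assumed to live in $L^{d/\alpha}$), and the C\'ordoba--C\'ordoba sign estimate has to be applied to the non-smooth object $\psi_-$. I would deal with this by the standard approximation device: replace $\Phi$ by a smooth convex $\Phi_\varepsilon$ with bounded derivative, mollify the field $\mathbf{v}$, run the computation in the smoothed setting, and pass to the limit along the lines of the DiPerna--Lions renormalization framework invoked in the preceding subsection.
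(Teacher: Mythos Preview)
Your argument is correct under the extra hypothesis~\eqref{critical}, but it proves strictly less than the proposition as stated: the positivity propagation claim in the paper carries \emph{no} size restriction on $(\div\mathbf{v})_-$, and the paper's proof does not need one. The paper argues by a pointwise maximum principle: at a first contact point $(s_0,x_0)$ with zero, the solution attains a global minimum, so $\psi(s_0,x_0)=0$, $\nabla\psi(s_0,x_0)=0$, and the kernel formula~\eqref{kernelPseudoLaplace} forces $(-\Delta)^{\alpha/2}\psi(s_0,x_0)\leq 0$; the transport term vanishes at this point regardless of $\div\mathbf{v}$, hence $\partial_s\psi(s_0,x_0)\geq 0$. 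A compactness/approximation step makes this rigorous. In particular the drift never has to be absorbed by the dissipation, which is why the paper can state positivity unconditionally and only introduce the smallness~\eqref{smallnessAssumption2} later, for the quantitative $L^p$ decay.

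Your energy approach, by contrast, converts the drift into $\tfrac12\int(\div\mathbf{v})_-\psi_-^2$ and then buys it back with Sobolev, which is exactly what forces you to invoke~\eqref{critical}. Within the paper this is harmless, since every downstream use of positivity is under the smallness assumption anyway; but you should flag that you are adding a hypothesis. Note also a periodic-case wrinkle: on $\T^d$ the embedding $\dot H^{\alpha/2}\hookrightarrow L^{2d/(d-\alpha)}$ requires zero mean, which $\psi_-\geq 0$ does not have, so the absorption step would need an extra adjustment there (e.g.\ a Poincar\'e-type correction or working with $\psi_- - \overline{\psi_-}$), whereas the paper's pointwise argument is insensitive to the ambient space.
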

\begin{proof}
Let us sketch the argument first.
If a solution of~\eqref{TDalphaCons} is smooth and positive,
then at a first contact point with zero, say $(s_0,x_0)$, it reaches a global minimum. One thus has $\psi(s_0,x_0)=0$ and $\nabla\psi(s_0,x_0)=\mathbf{0}$,
and therefore:
\[
\div\left( \mathbf{v} \psi \right) = (\div \mathbf{v}) \psi(s_0,x_0) + (\mathbf{v}\cdot\nabla) \psi(s_0,x_0) =0.
\]
Moreover, for $0<\alpha<2$, by~\eqref{kernelPseudoLaplace}, there exists a positive kernel $K_{d,\alpha}$ such that:
\[
(-\Delta)^{\alpha/2} \psi (s_0,x_0) =  - \int_{\R^d} \left(\psi(s_0,y)-\psi(s_0,x_0) \right)  K_{d,\alpha}(y-x_0) dy \leq 0
\]
and the inequality is strict, unless $\psi(s_0,\cdot) \equiv0$.
The equation ensures that $\partial_s \psi (s_0,x_0)\geq 0$ and, in particular, the solution remains positive forever.
To make the proof fully rigorous, one proceeds \eg as in~\cite[\S2.1]{ISV2016}: for given $T,R>0$ and $\psi_0>0$, one considers
the approximation $\psi_R$ where the kernel~$K_{d,\alpha}$ is restricted to $\ball{0}{R}$ and
\[
s_0 = \inf \left\{ s\in(0,T) \,;\, \exists x_0 \in \ball{0}{R}, \quad \psi_R(s,x_0) = 0 \right\}.
\]
By compactness, $s_0$ is attained and $s_0>0$. As $\psi_R(s,\cdot)$ is not identically zero,
the previous computation ensures that $\partial_s \psi_R(s_0,x_0)>0$
and thus $\psi_R$ had to be negative in the neighborhood of $x_0$ a short time before $s_0$, which is contradictory.
For a general $\psi_0\geq0$, the data can be approximated by  a strictly positive mollification, whose strict positivity propagates downstream.
Passing to the limit at a later time $s>0$ ensures therefore that $\psi(s)\geq0$.
\cqfd\end{proof}

\subsection{Propagation of the $L^1$ norm and conservation of momentum}

The simple structure of~\eqref{TDalphaCons} inherited from the underlying conservation law plays in our favor.
\begin{prop}
Let $\psi$ be a solution  to~\eqref{TDalphaCons}. Then 
\begin{gather} \label{propL1Norm}
\|\psi(s,\cdot)\|_{L^1}\leq \|\psi_0\|_{L^1} \\ \intertext{and}
\label{consMomentum}
\int_{\R^d} \psi(s,x) dx = \int_{\R^d}\psi_0(x)dx.
\end{gather}
\end{prop}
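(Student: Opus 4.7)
The natural strategy is to handle the two statements in reverse order: first establish the conservation of momentum~\eqref{consMomentum}, and then deduce the $L^1$ contraction~\eqref{propL1Norm} from it, by invoking the propagation of positivity just established. For \eqref{consMomentum}, the idea is to integrate the equation~\eqref{TDalphaCons} directly over $\R^d$. The fractional diffusion term is conservative: since $(-\Delta)^{\alpha/2}$ is the Fourier multiplier by $|\xi|^\alpha$, which vanishes at $\xi=0$, one has $\int_{\R^d} (-\Delta)^{\alpha/2}\psi(s,x)\,dx = 0$. Equivalently, on $\T^d$, this is the fact that the zeroth Fourier coefficient is untouched. The transport term is also conservative because it is written in divergence form: $\int_{\R^d} \div(\mathbf{v}(s)\psi(s,\cdot))\,dx = 0$ by the divergence theorem, provided one has enough decay of $\mathbf{v}\psi$ at infinity. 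Together this gives $\tfrac{d}{ds}\int\psi(s,x)\,dx=0$, hence~\eqref{consMomentum}.

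For~\eqref{propL1Norm}, the plan is to use linearity and the positivity principle of the previous proposition. Write $\psi_0=(\psi_0)_+-(\psi_0)_-$, and let $\psi^{\pm}$ denote the solutions of~\eqref{TDalphaCons} stemming respectively from the nonnegative data $(\psi_0)_\pm\geq 0$. Because~\eqref{TDalphaCons} is linear in $\psi$, uniqueness yields $\psi=\psi^+-\psi^-$. By the previous proposition both $\psi^\pm(s,\cdot)\geq 0$ for all $s\geq 0$, so the pointwise triangle inequality and~\eqref{consMomentum} applied separately to $\psi^+$ and $\psi^-$ give
\[
\|\psi(s)\|_{L^1}\leq \|\psi^+(s)\|_{L^1}+\|\psi^-(s)\|_{L^1}
=\int_{\R^d}\psi^+(s)\,dx+\int_{\R^d}\psi^-(s)\,dx
=\|(\psi_0)_+\|_{L^1}+\|(\psi_0)_-\|_{L^1}=\|\psi_0\|_{L^1},
\]
which is precisely~\eqref{propL1Norm}. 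An alternative, but morally equivalent, route is a Kato-type inequality $(-\Delta)^{\alpha/2}|\psi|\leq \operatorname{sgn}(\psi)\cdot(-\Delta)^{\alpha/2}\psi$ coming from~\eqref{kernelPseudoLaplace}, combined with $\div(\mathbf{v}|\psi|)=\operatorname{sgn}(\psi)\,\div(\mathbf{v}\psi)$ at points where $\psi\neq 0$; this yields $\partial_s|\psi|+(-\Delta)^{\alpha/2}|\psi|\leq -\div(\mathbf{v}|\psi|)$ in the sense of distributions, and integrating in $x$ gives the same conclusion. I would prefer the decomposition approach, which leverages the positivity result already in place.

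The main obstacle in both steps is justifying the boundary terms at infinity. The formal integrations require enough spatial decay of $\psi$ and $\mathbf{v}\psi$ for the integrals of $(-\Delta)^{\alpha/2}\psi$ and $\div(\mathbf{v}\psi)$ to vanish, yet $\mathbf{v}\in\BMO$ is merely of mild growth and the natural regularity class for $\psi$ is only $L^1\cap L^p$. The standard remedy is to approximate: mollify $\psi_0$ and truncate $\mathbf{v}$ so that the solutions are smooth and rapidly decaying (or work on the torus, where no decay issue arises), verify~\eqref{propL1Norm} and~\eqref{consMomentum} for the approximants, and pass to the limit using the stability statements recalled in the well-posedness subsection. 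On $\T^d$, the periodic setting makes both computations immediate. The remaining technical point is that the renormalization / approximation argument is compatible with the $\BMO$ field $\mathbf{v}$; this is handled by the DiPerna--Lions commutator lemma quoted earlier, since only $\mathbf{v}\in L^1_{\text{loc}}$ and $(\div\mathbf{v})_-\in L^\infty_{\text{loc}}$ (in the regularized problem) are needed to justify the passage to the limit.
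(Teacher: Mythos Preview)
Your proposal is correct and follows essentially the same route as the paper: decompose $\psi_0$ into its positive and negative parts, evolve each separately, use the propagation of positivity together with the conservation of total mass (obtained by integrating the equation and using that both the fractional Laplacian and the divergence term have zero integral) to conclude the $L^1$ contraction. The only difference is cosmetic ordering --- you isolate~\eqref{consMomentum} first and then apply it to $\psi^\pm$, whereas the paper establishes $\tfrac{d}{ds}\int\psi^\pm=0$ inside the $L^1$ argument and declares~\eqref{consMomentum} ``immediate'' afterwards; your discussion of the approximation needed to justify the vanishing boundary terms is in fact more thorough than what the paper provides.
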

\begin{proof}
For the first statement, let us decompose $\psi_0=\psi_0^+-\psi_0^-$ where both $\psi_0^+$ and $\psi_0^-$ are positive  and have disjoint supports. Let $\psi^+$ and $\psi^-$ be the solutions to the equation with initial data $\psi^+_0$ and $\psi^-_0$ correspondingly. Then, by linearity,
$\psi(s)=\psi^+(s)-\psi^-(s)$ and therefore
\[
\|\psi(s,\cdot)\|_{L^1}\leq \|\psi^+(s,\cdot)\|_{L^1}+\|\psi^-(s,\cdot)\|_{L^1}.
\]
Equation~\eqref{TDalphaCons} and an integration by part ensure that:
\begin{equation*}
\frac{d}{ds} \int_{\R^d} \psi^\pm (s,x) dx = -\int_{\R^d} (-\Delta)^{\alpha/2}\psi^\pm -\int_{\R^d}\div(\mathbf{v}\psi^\pm)dx=0.
\end{equation*}
As the propagation of positivity yields that $\psi^\pm\geq 0$,
one gets $\|\psi^\pm(s,\cdot)\|_{L^1}= \|\psi^\pm_0\|_{L^1}$ and finally $\|\psi(s,\cdot)\|_{L^1}\leq  \|\psi^+_0\|_{L^1}+ \|\psi^-_0\|_{L^1}= \|\psi_0\|_{L^1}$,
hence~\eqref{propL1Norm}. The identity~\eqref{consMomentum} is immediate.
\cqfd\end{proof}

\begin{rmk}
Note that, because of the diffusion, the functions
$\psi^\pm$ of the previous proof will not coincide, in general, with the positive and negative parts $\psi_\pm$ of $\psi$.
\end{rmk}

%%\begin{rmk}
%%In our case, the particularly simple structure of~\eqref{TDalphaCons} inherited from the underlying conservation law plays in our favor.
%%For general diffusion operators, the $L^1$-estimate for transport-diffusion  equations is somewhat harder compared to the $L^p$-one,
%%because of the lack of strong convexity of $t\mapsto |t|$; see \cite[\S2.1.3 and \S3.4]{LBL2019}.
%%\end{rmk}

\subsection{Estimate of the $L^p$ norm}

For $h<d/2$ let us introduce the constant in the Sobolev embedding $\dot{H}^{h}(\R^d) \subset L^{2d/(d-2h)}$ (see~\eg\cite{T1979}):
\begin{equation}\label{sobolevEmbedding}
S_{h}(\R^d)^{-1} = \sup \left\{
\norme[L^{2d/(d-2h)}]{f}^2
\,;\,  f\in \dot{H}^{h}(\R^d), \enspace  \displaystyle \int_{\R^d} |(-\Delta)^{h/2} f|^2  = 1
\right\} >0.
\end{equation}
The idea is to relax the uniform control given by the maximum principle for~\eqref{TDalphaCons} into a weaker
one in the scale of Lebesgue spaces.

\begin{prop}\label{propagationLp}
For any $\alpha\in(0,2]$ and any dimension $d>\alpha$, if the transport field satisfies
\begin{equation}\label{smallnessAssumptiom}
(p-1) \norme[L^\infty_t L^{d/\alpha}_x]{(\div\mathbf{v})_-}  \leq S_{\alpha/2}(\R^d)
\end{equation}
for some $p\geq2$ (eventually restricted to $p=2^n$ with $n\in\N$ if $\alpha<2$),
then any solution of~\eqref{TDalphaCons} satisfies
\begin{equation}\label{estimPropagationLp1}
\norme[L^p]{\psi(s)}^p + S_{\alpha/2}(\R^d) \int_0^s \norme[L^\sigma]{ \psi(\tau) }^p d\tau  \leq \norme[L^p]{\psi_0}^p
\qquad\text{with}\qquad
\sigma = \frac{dp}{d-\alpha}
\end{equation}
and in particular
\begin{equation}\label{estimPropagationLp2}
\forall q\in[1,p],\qquad
\forall s\geq0,\qquad
\norme[L^q]{\psi(s)} \leq \norme[L^q]{\psi_0}
\end{equation}
for any $\psi_0\in L^1\cap L^p$.
In particular, when $\div(\mathbf{v})\geq0$, the estimate \eqref{estimPropagationLp2} holds for $1\leq q\leq \infty$.
\end{prop}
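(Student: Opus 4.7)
The plan is to perform the standard energy estimate, testing~\eqref{TDalphaCons} against the weight $|\psi|^{p-2}\psi$ and integrating over $\R^d$. Three contributions will appear. The time derivative gives $\tfrac{1}{p}\tfrac{d}{ds}\norme[L^p]{\psi}^p$. The transport term, after one integration by parts and the identity $|\psi|^{p-2}\psi\,\nabla\psi=\tfrac{1}{p}\nabla|\psi|^p$, collapses to $-\tfrac{p-1}{p}\int(\div\mathbf{v})|\psi|^p\,dx$, so that only the negative part of $\div\mathbf{v}$ will enter the eventual upper bound. The fractional diffusion term will be bounded from below, via the pointwise Cordoba-Cordoba inequality of~\cite{CC2004} applied to the convex function $\Phi(y)=|y|^{p/2}$ followed by multiplication by $|\psi|^{p/2}$ and integration, by $\tfrac{2}{p}\int\bigl|(-\Delta)^{\alpha/4}|\psi|^{p/2}\bigr|^2 dx$. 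The dyadic restriction $p=2^n$ in the range $\alpha<2$ is precisely what allows this pointwise inequality to be applied iteratively on the positive function $|\psi|^{p/2^k}$, thereby avoiding the differentiation of a non-integer power of a signed function.

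Assembling the three pieces, multiplying by $p$, and then bounding $(p-1)\int(\div\mathbf{v})_-|\psi|^p\,dx$ by H\"older with exponents $d/\alpha$ and $d/(d-\alpha)$ followed by the Sobolev embedding~\eqref{sobolevEmbedding} applied to $f=|\psi|^{p/2}$, I would arrive at
\[
\frac{d}{ds}\norme[L^p]{\psi}^p + 2\int\bigl|(-\Delta)^{\alpha/4}|\psi|^{p/2}\bigr|^2 dx \;\leq\; (p-1)\,\norme[L^{d/\alpha}]{(\div\mathbf{v})_-}\,S_{\alpha/2}^{-1}\int\bigl|(-\Delta)^{\alpha/4}|\psi|^{p/2}\bigr|^2 dx.
\]
Assumption~\eqref{smallnessAssumptiom} ensures that the coefficient on the right is at most $1$, so one copy of the $\dot H^{\alpha/2}$-seminorm can be absorbed into the left-hand side. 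The surviving term $S_{\alpha/2}\int\bigl|(-\Delta)^{\alpha/4}|\psi|^{p/2}\bigr|^2 dx$ is then rewritten, by a second call to the same Sobolev embedding, as $S_{\alpha/2}\norme[L^\sigma]{\psi}^p$ with $\sigma=dp/(d-\alpha)$. Integrating in time from $0$ to $s$ delivers~\eqref{estimPropagationLp1} and, in particular, the monotonicity $\norme[L^p]{\psi(s)}\leq\norme[L^p]{\psi_0}$.

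For~\eqref{estimPropagationLp2} at intermediate exponents $q\in[1,p]$, I would leverage the linearity of~\eqref{TDalphaCons}: the solution operator $T_s:\psi_0\mapsto\psi(s,\cdot)$ has norm at most $1$ both on $L^1$ (by~\eqref{propL1Norm}) and on $L^p$ (by the previous step), so Riesz-Thorin complex interpolation immediately gives $\norme[L^q]{T_s\psi_0}\leq\norme[L^q]{\psi_0}$ for every $q\in[1,p]$. In the divergence-free case the smallness constraint is vacuous for every $p\geq 2$, so the energy identity closes at every $p$; the endpoint $q=\infty$ then follows either by passing to the limit $p\to\infty$ or by a direct pointwise maximum principle in the spirit of the positivity proof. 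The main technical obstacle I anticipate is the rigorous justification of the Cordoba-Cordoba step when $\psi$ is signed and $p/2$ is non-integer --- exactly what motivates the dyadic restriction when $\alpha<2$ --- while the integration by parts against a merely BMO field is a routine mollification supported by the well-posedness framework recalled above.
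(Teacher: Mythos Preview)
Your proposal is correct and essentially mirrors the paper's proof: the same energy estimate against $|\psi|^{p-2}\psi$, the iterated C\'ordoba--C\'ordoba inequality~\eqref{fractionalHeatLp} for the dissipative term when $p=2^n$, and H\"older with exponents $d/\alpha$, $d/(d-\alpha)$ plus the Sobolev embedding~\eqref{sobolevEmbedding} to absorb the transport contribution under~\eqref{smallnessAssumptiom}. Two minor remarks: the ``surviving term'' after absorption carries coefficient~$1$, not $S_{\alpha/2}$ --- the factor $S_{\alpha/2}$ only appears once you apply the Sobolev lower bound a second time; and your explicit appeal to Riesz--Thorin for~\eqref{estimPropagationLp2} is in fact cleaner than the paper's terse ``interpolate with~\eqref{propL1Norm}'', since naive H\"older interpolation on the function $\psi(s)$ alone would only give $\norme[L^q]{\psi(s)}\leq \norme[L^1]{\psi_0}^{1-\theta}\norme[L^p]{\psi_0}^{\theta}$ rather than the stated $\norme[L^q]{\psi_0}$.
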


\begin{rmk}
The following proof also establishes that all solutions of~\eqref{TDalphaCons} satisfy:
\begin{equation}
\norme[L^p]{\psi(s)} \leq \norme[L^p]{\psi_0} e^{t (1-\frac{1}{p}) \norme[L^\infty_{t,x}]{(\div \mathbf{v})_-} }
\end{equation}
regardless of the diffusion term and independently of~\eqref{smallnessAssumptiom}.
For what follows, we are however interested in getting a better (\ie non-increasing) control of the $L^p$-norm
as given by~\eqref{estimPropagationLp1}-\eqref{estimPropagationLp2}.
\end{rmk}

\begin{proof}
Using $p|\psi|^{p-2}\psi$  as a multiplier for the equation leads to:
\[
\frac{d}{ds} \int_{\R^d} |\psi(s,x)|^p dx
+ p \int_{\R^d}  |\psi|^{p-2}\psi \cdot (-\Delta)^{\alpha/2}\psi 
=
- p \int_{\R^d}  \div\left( \mathbf{v} \psi \right) |\psi|^{p-2}\psi.
\]
For the integral on the right-hand side, an integration by  part gives:
\begin{align*}
\int_{\R^d}  \div\left( \mathbf{v} \psi \right) |\psi|^{p-2}\psi
& = - (p-1)\int_{\R^d}    |\psi|^{p-2}\psi (\mathbf{v}\cdot\nabla) \psi \\
& = - (p-1)\int_{\R^d}    \div\left( \mathbf{v} \psi \right) |\psi|^{p-2}\psi + (p-1)  \int_{\R^d}|\psi|^p \div\mathbf{v}.
\end{align*}
One thus has this identity:
\begin{equation}\label{identityRhsLpEstim}
\int_{\R^d}  \div\left( \mathbf{v} \psi \right) |\psi|^{p-2}\psi = \left(1-\frac{1}{p}\right)  \int_{\R^d}|\psi|^p \div\mathbf{v}
\end{equation}
and thus
\begin{equation}\label{startingLpEstimate}
\frac{d}{ds} \int_{\R^d} |\psi(s,x)|^p dx
+ p \int_{\R^d}  |\psi|^{p-2}\psi \cdot (-\Delta)^{\alpha/2}\psi 
= - (p-1) \int_{\R^d} |\psi|^p \div\mathbf{v}.
\end{equation}
For the integral on the left-hand side of~\eqref{startingLpEstimate} and when $\alpha=2$, the following identity holds:
\begin{subequations}
\begin{equation}\label{heatLp}
p \int_{\R^d}  |\psi|^{p-2}\psi \cdot (-\Delta)\psi  = p(p-1) \int_{\R^d}  |\psi|^{p-2} |\nabla \psi|^2 
= 4\left(1-\frac{1}{p}\right)\int_{\R^d} |\nabla (|\psi|^{p/2})|^2 \geq 0.
\end{equation}
For $0<\alpha<2$, one needs to replace~\eqref{heatLp} because the Leibniz formula is no longer valid;
instead, one follows the ideas of~\cite{CC2004}.
The key is the point-wise inequality \cite[Prop. 2.3]{CC2004}:
\[
2\psi \cdot (-\Delta)^{\alpha/2}\psi\geq (-\Delta)^{\alpha/2}(|\psi|^2)
\]
which follows immediately from the kernel representation~\eqref{kernelPseudoLaplace} of $(-\Delta)^{\alpha/2}$. Applied recursively $n-1$ times
when $p=2^n$ and $n\geq1$ is an integer, it provides for $1\leq k\leq n-1$ (or without intermediary $k$ if $n=1$):
\begin{equation}\label{fractionalHeatLp}
p\int_{\R^d}  |\psi|^{p-2}\psi \cdot (-\Delta)^{\alpha/2}\psi 
\geq 
\frac{p}{2^k}\int_{\R^d}  |\psi|^{p-2^k} (-\Delta)^{\alpha/2}(|\psi|^{2^k})
\geq
2\int_{\R^d} |(-\Delta)^{\alpha/4}(\psi^{p/2})|^2.
\end{equation}
\end{subequations}
Compared to \cite[Lemma 2.4]{CC2004}, the inequality~\eqref{fractionalHeatLp} is improved by a factor of 2.
Overall, for $p\geq2$ (restricted to exact powers of $2$ if $0<\alpha<2$), the evolution of the $L^p$-norm of  smooth solutions of~\eqref{TDalphaCons} obeys the following inequality:
\begin{equation}\label{groundLpEstimate}
\frac{d}{ds} \norme[L^p]{\psi}^p + 2\int_{\R^d} |(-\Delta)^{\alpha/4} (\psi^{p/2})|^2  \leq 
- (p-1) \int_{\R^d} |\psi|^p \div\mathbf{v}.
\end{equation}
Obviously, only the focusing zones (\ie regions where $\div\mathbf{v}<0$) of the transport field can contribute to an increase of the $L^p$ norm;
the other just tends to spread $\psi$ out. Using the notation~\eqref{PosNegNotation} for the negative part, one thus has the following estimate:
\begin{equation}\label{groundLpEstimateSigned}
\frac{d}{ds} \norme[L^p]{\psi}^p + 2\int_{\R^d} |(-\Delta)^{\alpha/4} (\psi^{p/2})|^2 
\leq
(p-1) \int_{\R^d} |\psi|^p (\div\mathbf{v})_-.
\end{equation}
In dimension $d\geq2$ and for $0<\alpha<2$, one uses the Sobolev embedding \eqref{sobolevEmbedding}.
For $\sigma = dp/(d-\alpha)$, one thus has:
\[
\norme[L^\sigma]{\psi}^p  = \norme[L^{2d/(d-\alpha)}]{\psi^{p/2}}^2
\leq  \frac{1}{S_{\alpha/2}(\R^d)}  \int_{\R^d} |(-\Delta)^{\alpha/4} \psi^{p/2}|^2 .
\]
The estimate~\eqref{groundLpEstimateSigned} becomes:
\begin{equation}\label{groundLpEstimateSignedSimplified}
\frac{d}{ds} \norme[L^p]{\psi}^p
+ 2 S_{\alpha/2}(\R^d) \cdot \norme[L^\sigma]{\psi}^p
\leq
(p-1) \int_{\R^d} |\psi|^p (\div\mathbf{v})_-.
\end{equation}
Finally, as the conjugate exponent of  $q=d/\alpha>1$ satisfies $p q' = \sigma$, one splits the right-hand side in the following way:
\[
\int_{\R^d} |\psi|^p (\div\mathbf{v})_- \leq
\norme[L^\sigma]{\psi}^p \norme[L^{d/\alpha}]{(\div\mathbf{v})_-}.
\]
Thanks to the smallness assumption~\eqref{smallnessAssumptiom},
it is then possible to bootstrap the Lebesgue norm into the left-hand side.
In that case, \eqref{groundLpEstimateSignedSimplified} ensures that $\frac{d}{ds}\norme[L^p]{\psi} + S_{\alpha/2}(\R^d) \cdot \norme[L^\sigma]{\psi}^p\leq 0$,
which gives~\eqref{estimPropagationLp1}.
One can then interpolate with~\eqref{propL1Norm} to control all $L^q$ norms for $1\leq q\leq p$.
\cqfd\end{proof}

\begin{rmk}\label{LargerV}
If $C_{\alpha,p}(\mathbf{v})=2S_{\alpha/2}(\R^d)-(p-1) \norme[L^\infty_t L^{d/\alpha}_x]{(\div\mathbf{v})_-}>0$ then~\eqref{estimPropagationLp1} still holds, but with the constant $S_{\alpha/2}(\R^d)$ replaced by $C_{\alpha,p}(\mathbf{v})$, which is not uniform in $\mathbf{v}$ anymore.
\end{rmk}

%%\begin{rmk}
%%We are left with two special cases of interest corresponding to $d\leq \alpha$, for which one will also restrict ourselves to~$p=2$.
%%When $d=\alpha\in\{1,2\}$,
%%the sobolev embedding ensures that $\phi \in \BMO$ instead of $L^{2d/(d-\alpha)}$;
%%in that case,
%%\[
%%\frac{d}{ds} \norme[L^2]{\psi}^2
%%+ 2 S_{\alpha/2}(\R^d) \cdot \norme[\BMO]{\psi}^2
%%\leq
%%\int_{\R^d} |\psi|^2 (\div\mathbf{v})_-
%%\leq \norme[L^1]{\psi^2}\norme[\BMO]{(\div\mathbf{v})_-}.
%%\]
%% the conclusion still holds provided $p=2$ and the smallness condition
%%\[
%%\norme[L^\infty_t \BMO_x]{(\div\mathbf{v})_-}  \leq S_{d/2}(\R^d)
%%\]
%%where the right-hand side is the Sobolev constant of the embedding $\dot{H}^{d/2}(\R^d)\subset \BMO$.
%%{\color{orange}
%%The second one is $d=1$ and $1<\alpha\leq 2$.
%%then it is the same problem then on the periodic torus; we have control of
%%a H\"older norm, but no integrability unless we concede a $\norme[L^{p/2}]{\psi}^p$ term on the right-hand side,
%%except for $p=2$ where we can use that $\int \psi = 0$. One could state the control of the $L^2$ norm for $\T^d$.
%%}
%%\end{rmk}

\begin{rmk}
An improved version of~\eqref{fractionalHeatLp} valid for average-free functions is found in \cite{CGHV2014} or \cite[Prop.~2.4]{CTV2015}:
\[
\int_{\R^d}  |\psi|^{p-2}\psi \cdot (-\Delta)^{\alpha/2}\psi  \geq \frac{1}{p} \norme[L^2]{(-\Delta)^{\alpha/2}(\psi^{p/2})}^2
+ C \norme[L^p]{\psi}^p.
\]
However, in our case, using a Sobolev embedding for $\psi^{p/2}$ provides some additional integrability
and in particular a control of the $L^\sigma$-norm with $\sigma>p$. This gain will be crucial in what follows.
It also allows us to put a restriction on the $L^{d/\alpha}$-norm of $(\div\mathbf{v})_-$, instead of requiring smallness in~$L^\infty$.
\end{rmk}

Note that on $\T^d$, the Sobolev embedding $\dot{H}^{h}(\T^d) \subset L^{2d/(d-2h)}$~\eqref{sobolevEmbedding} is only valid for average-free functions. However,~$\psi^{p/2}$ is not average-free in general (\ie $p\neq2$), even if $\psi$ is so.
For the periodic case, one will use the following simpler result, whose proof is also contained above.
\begin{prop}\label{periodicCase}
If $\psi$ is an average-free solution of~\eqref{TDalphaCons} on $\T^d$ with $\alpha\in(0,2]$ and $d>\alpha$ and
\[
\norme[L^\infty_t L^{d/\alpha}_x]{(\div\mathbf{v})_-}  \leq S_{\alpha/2}(\T^d),
\]
then
\[
\norme[L^2]{\psi(s)}^2 + S_{\alpha/2}(\T^d) \int_0^s \norme[L^\sigma]{ \psi(\tau) }^2 d\tau  \leq \norme[L^2]{\psi_0}^2
\qquad\text{with}\qquad
\sigma = \frac{2d}{d-\alpha}\cdotp
\]
\end{prop}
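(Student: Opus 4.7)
The plan is to follow verbatim the argument of Proposition~\ref{propagationLp} specialized to $p=2$, where the technical detour through the pointwise inequality $2\psi\cdot(-\Delta)^{\alpha/2}\psi\geq(-\Delta)^{\alpha/2}(|\psi|^2)$ becomes unnecessary. Indeed, for $p=2$ one has $\psi^{p/2}=\psi$, so Plancherel's identity on $\mathbb{T}^d$ gives directly
\[
2\int_{\mathbb{T}^d}\psi\cdot(-\Delta)^{\alpha/2}\psi\,dx = 2\int_{\mathbb{T}^d}|(-\Delta)^{\alpha/4}\psi|^2\,dx.
\]
A preliminary observation is that the evolution preserves the average-freeness of $\psi$, which follows directly from~\eqref{consMomentum}.

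Multiplying~\eqref{TDalphaCons} by $2\psi$, integrating on $\mathbb{T}^d$, and handling the advection term as in~\eqref{identityRhsLpEstim} with $p=2$ yields the torus analogue of~\eqref{groundLpEstimateSigned}:
\[
\frac{d}{ds}\norme[L^2]{\psi}^2 + 2\int_{\mathbb{T}^d}|(-\Delta)^{\alpha/4}\psi|^2\,dx \leq \int_{\mathbb{T}^d}|\psi|^2(\div\mathbf{v})_-\,dx.
\]
Because $\psi(s,\cdot)$ remains average-free, the Sobolev embedding~\eqref{sobolevEmbedding} is valid on $\mathbb{T}^d$ and controls $\norme[L^\sigma]{\psi}^2$, with $\sigma=2d/(d-\alpha)$, by $S_{\alpha/2}(\mathbb{T}^d)^{-1}$ times the dissipation integral. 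A H\"older splitting with conjugate exponents $\sigma/2$ and $d/\alpha$ then bounds the right-hand side by $\norme[L^\sigma]{\psi}^2\,\norme[L^{d/\alpha}]{(\div\mathbf{v})_-}$. Under the smallness hypothesis, one copy of $S_{\alpha/2}(\mathbb{T}^d)\norme[L^\sigma]{\psi}^2$ is absorbed into the dissipation term, leaving
\[
\frac{d}{ds}\norme[L^2]{\psi}^2 + S_{\alpha/2}(\mathbb{T}^d)\norme[L^\sigma]{\psi}^2 \leq 0,
\]
and integrating over $[0,s]$ gives the announced estimate.

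The only obstacle in transferring the $\R^d$ argument to the torus is the lack of average-freeness of $\psi^{p/2}$ for $p>2$, which would prevent the direct use of~\eqref{sobolevEmbedding} on $\mathbb{T}^d$; restricting to $p=2$ sidesteps this difficulty entirely, which is precisely why the periodic statement is limited to the $L^2$ norm. All manipulations above are standard and can be justified on smooth approximations by a density argument.
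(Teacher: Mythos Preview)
Your proof is correct and follows exactly the route the paper indicates: the paper states that the proof of Proposition~\ref{periodicCase} ``is also contained above,'' meaning one specializes the argument of Proposition~\ref{propagationLp} to $p=2$ so that $\psi^{p/2}=\psi$ remains average-free and the torus Sobolev embedding applies. Your remark that for $p=2$ the Plancherel identity replaces the Cordoba--Cordoba pointwise inequality is a cosmetic simplification (the paper's recursive estimate~\eqref{fractionalHeatLp} with $n=1$ reduces to the same identity), and your explanation of why the periodic statement is restricted to $L^2$ matches the paper's own commentary preceding the proposition.
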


%\newpage
\section{Propagation of the atom property by the dual conservation law}\label{par:propAtom}

As long as the advection field has mildly convergent characteristics (expressed precisely by~\eqref{smallnessAssumption2}),
the weak maximum principle implies that the (non-local) diffusion 
propagates the properties of atoms. It is possible to trade a slow increase in each atomic radius to
gain some decay in amplitude.

\subsection{Local propagation}

\begin{prop}\label{PropagationAtom}
Let us assume that $1\leq \alpha\leq 2$ and $d>\alpha$ and that the velocity field $\mathbf{v} \in \mathrm{BMO}$ satisfies
\begin{equation}\label{smallnessAssumption2}
(p-1) \norme[L^\infty_t L^{d/\alpha}_x]{(\div\mathbf{v})_-}  \leq S_{\alpha/2}(\R^d)
\end{equation}
for some $p\geq2$ (eventually restricted to $p=2^n$ with $n\in\N$ if $\alpha<2$) such that
\[
p > \frac{d}{d-(\alpha-\omega)} \qquad\text{with}\qquad
0<\omega<\alpha\wedge 1.
\]
Then there exist constants $\delta, K$ and $\gamma$, depending only on $d$, $p$, $\alpha$
and $\|\mathbf{v}\|_{\mathrm{BMO}}$, such that for all $r\in (0,1]$, the following implication holds:
\begin{equation}
\psi_0\in \atom^p_{r}
\qquad\Longrightarrow\qquad
\forall s\in[0,\gamma r^\alpha],\quad
\psi(s,\cdot)\in \left(1-\frac{\delta s}{r^{\alpha}}\right) \atom^p_{(r^\alpha+Ks)^{1/\alpha}}
\end{equation}
where $\psi$ denotes the solution of the Cauchy problem~\eqref{TDalphaCons}.
The constant $A$, which is implicit in the definition of $\atom^p_{r}$, has to be chosen large enough.
The admissible threshold for $A$, which also depends only on $d$, $p$, $\alpha$
and $\|\mathbf{v}\|_{\mathrm{BMO}}$, is specified in remark~\ref{choiceOfA}.
\end{prop}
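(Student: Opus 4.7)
My plan is to verify each of the four defining properties of the atom class $\atom^p_R$, with amplitude factor $\lambda_s = 1 - \delta s/r^\alpha$ and radius $R = (r^\alpha + Ks)^{1/\alpha}$, for a solution $\psi$ of~\eqref{TDalphaCons} starting from $\psi_0 \in \atom^p_r$. The zero-average condition is preserved exactly by the momentum conservation~\eqref{consMomentum}, and the raw bound $\norme[L^1]{\psi(s)} \leq 1$ is given for free by~\eqref{propL1Norm}. What I must work for is the strict factor $\lambda_s < 1$ and the matching of the $L^p$ and localization bounds with the enlarged radius $R$. Since $s \mapsto R$ is not linear when $\alpha>1$, it is convenient, as suggested at the end of~\S\ref{par:regularity}, to treat this as a finite Euler step rather than to try to produce a differential identity: I only need to check that the atomic conditions hold at the endpoint $s = \gamma r^\alpha$ of a controlled time step, with the rate encoded by $\delta$ and $K$.

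For the $L^p$-amplitude, I would exploit the sharp dissipation inequality \eqref{estimPropagationLp1} from Proposition~\ref{propagationLp} together with the interpolation $\norme[L^p]{\psi} \leq \norme[L^1]{\psi}^{1-\theta}\,\norme[L^\sigma]{\psi}^{\theta}$ for the exponent $\theta = (1-1/p)/(1-1/\sigma) \in (0,1)$. Since $\norme[L^1]{\psi} \leq 1$, this converts the dissipation into a super-linear autonomous ODE
\[
\frac{d}{ds}\norme[L^p]{\psi}^p \leq -\,S_{\alpha/2}\,\norme[L^p]{\psi}^{p/\theta},
\]
whose explicit solution gives quantitative decay of $\norme[L^p]{\psi}$. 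Matching this decay against the target $\lambda_s \cdot A R^{-d(1-1/p)}$, and noting that $R^{-d(1-1/p)}$ is itself \emph{smaller} than $r^{-d(1-1/p)}$, boils down to choosing the constants so that the super-linear dissipation produces, over a time $\gamma r^\alpha$, a gain that beats both the $\lambda_s$ factor and the radius correction; the freedom to choose $A$ large (remark~\ref{choiceOfA}) is what gives enough room for this.

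The localization is the genuinely hard step. Using the Córdoba-Córdoba pointwise inequality, the modulus $|\psi|$ satisfies the sub-solution property
\[
\partial_s|\psi| + (-\Delta)^{\alpha/2}|\psi| + \div(\mathbf{v}|\psi|) \leq 0,
\]
and testing against $\Omega(x - x_s)$ with a moving center $x_s$ to be chosen, then transferring the operators by duality, yields
\[
\frac{d}{ds}\int |\psi|\,\Omega(x-x_s)\,dx \leq -\int |\psi|\,(-\Delta)^{\alpha/2}\Omega(x-x_s)\,dx + \int |\psi|\,(\mathbf{v}-\dot{x}_s)\cdot\nabla\Omega(x-x_s)\,dx.
\]
The diffusive term is controlled by the explicit pointwise decay $|(-\Delta)^{\alpha/2}\Omega(z)| \lesssim (1+|z|)^{\omega-\alpha}$, available precisely because $\omega<\alpha$. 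For the drift term I would pick $\dot{x}_s$ as the average of $\mathbf{v}$ over $\ball{x_s}{R}$; the BMO hypothesis then controls $\mathbf{v} - \dot{x}_s$ via the John-Nirenberg inequality in a Luxemburg-type norm with a logarithmic loss proportional to $\norme[\BMO]{\mathbf{v}}$, and pairing against $|\psi|\,\nabla\Omega$ — which is essentially concentrated where $\psi$ lives — closes the estimate and sets the admissible value of $K$.

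The last step is to calibrate $\delta, K, \gamma, A$ so that all four atomic conditions close simultaneously at $s = \gamma r^\alpha$. The restriction $s \leq \gamma r^\alpha$ keeps us in the regime $R \sim r$ where all scale-dependent estimates linearise, and the lower bound $p > d/(d-(\alpha-\omega))$ is what ensures that the diffusive gain in the $L^p$-step dominates the spreading rate in the localization step (so that the exponent $p/\theta$ above really does beat the $R^\omega$ growth in the weight). I expect the main obstacle to be genuinely the localization: the fat non-local tail of $(-\Delta)^{\alpha/2}$ interacts non-trivially with the unbounded BMO drift, and only the delicate balance between the John-Nirenberg control of $\mathbf{v} - \dot{x}_s$ at dyadic scales and the pointwise decay of $(-\Delta)^{\alpha/2}\Omega$ will allow the localization bound to match the target $R^\omega$.
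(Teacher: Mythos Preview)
Your plan captures Steps~2 and~3 of the paper's proof quite well --- the Riccati-type control of $\norme[L^p]{\psi}$ via the dissipation inequality and interpolation with $L^1$, and the concentration estimate obtained by differentiating $\int |\psi|\,\Omega(x-x_s)\,dx$ with a moving center following the local average of~$\mathbf v$. But you have entirely skipped the paper's Step~1: the \emph{strict} decay of the $L^1$-norm. Membership in $\lambda_s\cdot\atom^p_R$ means $\lambda_s^{-1}\psi(s)\in\atom^p_R$, so you need $\norme[L^1]{\psi(s)}\leq \lambda_s = 1-\delta s/r^\alpha$, not merely $\leq 1$. The non-strict bound~\eqref{propL1Norm} does not provide this. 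The paper obtains the strict decay by going back to the kernel formula~\eqref{kernelPseudoLaplace}: writing $D_\pm=\operatorname{supp}\psi_\pm$, one has
\[
\frac{d}{ds}\norme[L^1]{\psi(s)} \leq -c_{d,\alpha}\int_{D_+}\int_{D_-}\frac{\psi_+(x)+\psi_-(y)}{|x-y|^{d+\alpha}}\,dy\,dx,
\]
and then uses the concentration of mass in $\ball{x(s)}{100r}$ (borrowed from Step~3) to bound the right-hand side by $-c\,r^{-\alpha}$. This coupling between Steps~1 and~3 is essential and is absent from your plan.

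You might hope to recover the strict $L^1$ bound a posteriori from the $L^p$ and concentration bounds via Proposition~\ref{AtomL1LpBound2}. The paper's remark~\ref{troubleWithL1} explains precisely why this fails: the resulting powers of~$A$ collide, and the inequalities~\eqref{largeA1} and~\eqref{largeA2} can no longer be reconciled by taking $A$ large. So Step~1 really must be done directly. A secondary point: your claimed bound $|(-\Delta)^{\alpha/2}\Omega(z)|\lesssim (1+|z|)^{\omega-\alpha}$ is false near the origin; since $\omega<\alpha$, the function has a genuine singularity $|z|^{-(\alpha-\omega)}$ there (see~\eqref{OmegaSecondEstimate}), and this singularity is exactly what forces the restriction $p>d/(d-(\alpha-\omega))$ in~\eqref{restrictP2} when you pair it against $|\psi|$ using H\"older.
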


\begin{rmk}\label{choiceOfp}
The proposition holds with $p=2$ if $d>2(\alpha-\omega)$, which is always possible if one chooses~$\omega$
such that $\alpha-1 < \omega<1$ when $\alpha<2$ (and $\omega>1/2$ when $\alpha=2$ and $d\geq3$);
in this case~\eqref{smallnessAssumption2} is also the least restrictive.
Thanks to proposition~\ref{periodicCase}, the result then also holds, mutatis mutandis, on $\T^d$.
\end{rmk}

\begin{proof}
The proof of proposition~\ref{PropagationAtom} is inspired by~\cite{KN2009} and \cite{D2010},
though the fractional derivative requires some additional care. 
Thanks to~\eqref{consMomentum}, the zero-average property of atoms is obviously propagated
by~\eqref{TDalphaCons}.

\medskip
Let $x(s)$ be the solution to the following ODE, which tracks the average flow on a ball of size $r$. It is obviously well defined for $\mathbf{v}\in L^1_{\text{loc}}(\R_+\times\R^d)$: 
\begin{equation}\label{trackCenter}
\begin{cases}
x^\prime(s)=\bar{\mathbf{v}}_{\ball{x(s)}{r}}\\
x(0) = x_0
\end{cases}
\qquad\text{where}\qquad
\bar{\mathbf{v}}_{\ball{x}{r}}(s) = \frac{1}{|\ball{x}{r}|}\int_{\ball{x}{r}} \mathbf{v}(s,y) dy.
\end{equation}

\paragraph{Step 1. Strict decay of the $L^1$-norm.}
One introduces
$S=\psi(s)^{-1}(\{0\})$ and  $D_\pm = \operatorname{supp}\psi_\pm(s)$.
Arguing as in~\cite[\S4]{KN2009} and taking advantage of the conservative form of~\eqref{TDalphaCons}:
\[
\frac{d}{ds}\norme[L^1]{\psi(s)} = 
-\int_{D_\pm} \frac{\psi}{|\psi|} (-\Delta)^{\alpha/2}\psi + \int_{S} |(-\Delta)^{\alpha/2}\psi|.
\]
The kernel formula~\eqref{kernelPseudoLaplace} allows us to improve~\eqref{propL1Norm} and gives
\begin{equation}\label{propL1NormImproved}
\frac{d}{ds}\norme[L^1]{\psi(s)} \leq -\frac{c_{d,\alpha}}{2}
\iint_{D_\pm^2} \left(\frac{\psi(y)}{|\psi(y)|}-\frac{\psi(x)}{|\psi(x)|}\right) \frac{\psi(y)-\psi(x)}{|y-x|^{d+\alpha}} dydx
\end{equation}
because (if $S$ is a set of strictly positive measure)
\[
\int_{S} |(-\Delta)^{\alpha/2}\psi (y)| dy
\leq c_{d,\alpha}\int_{x\in D_\pm} \int_{y\in S} \frac{|\psi(x)|}{|y-x|^{d+\alpha}} dydx.
\]
The right-hand side of \eqref{propL1NormImproved} is negative:
\[
\frac{d}{ds}\norme[L^1]{\psi(s)} \leq -c_{d,\alpha}\left[
\int_{D_+} \left(\int_{D_-} \frac{dy}{|x-y|^{d+\alpha}}\right) \psi_+(s,x) dx
+\int_{D_-} \left(\int_{D_+} \frac{dy}{|x-y|^{d+\alpha}}\right) \psi_-(s,x) dx
\right].
\]
Obviously, the domains of integration $D_\pm$ can be reduced to $D_\pm\cap \ball{x(s)}{100 r}$.
Provided that most of the $L^1$-mass of $\psi$ is localized in $\ball{x(s)}{100 r}$, which, as explained in~\cite{KN2009}, is ensured by the 3rd part of the proof, it ends up giving:
\[
\frac{d}{ds}\norme[L^1]{\psi(s)} \lesssim - r^{-\alpha}
\]
\ie for $\delta$ and $\gamma>0$ small enough
\begin{equation}
\forall s\in[0,\gamma r^\alpha],\qquad
\norme[L^1]{\psi(s)} \leq 1-\frac{\delta s}{r^{\alpha}}\cdotp
\end{equation}

\paragraph{Step 2. Strict decay of the $L^p$ norm.}
We have already proven that, under the smallness assumption~\eqref{smallnessAssumption2},
the right-hand side of~\eqref{groundLpEstimateSignedSimplified} can be resorbed within the elliptic term, \ie
\[
\frac{d}{ds} \norme[L^p]{\psi}^p
\leq -  S_{\alpha/2}(\R^d) \cdot \norme[L^\sigma]{\psi}^p <0.
\]
Next, as $\sigma=\frac{dp}{d-\alpha}>p$, one can use an elegant idea of \cite[p.~517]{CC2004}, which is to combine the interpolation inequality
$\norme[L^p]{\psi} \leq \norme[L^1]{\psi}^{1-\theta} \norme[L^\sigma]{\psi}^\theta$ for $\theta = \frac{(p-1)d}{(p-1)d+\alpha}$
with the propagation of the $L^1$-norm~\eqref{propL1Norm}. As~$\psi_0$ is an atom~$\atom^p_{r}$, it ensures that:
\begin{equation}
\frac{d}{ds} \norme[L^p]{\psi}^p
\leq
- S_{\alpha/2}(\R^d) \cdot \norme[L^1]{\psi_0}^{-\frac{\alpha p}{(p-1)d}}
\norme[L^p]{\psi}^{p/\theta}
\leq
-S_{\alpha/2}(\R^d) \cdot
\norme[L^p]{\psi}^{p/\theta}.
\end{equation}
This is a Riccati-type ODE that can be solved explicitly:
\[
\norme[L^p]{\psi(s)}^p \leq 
\left(
\norme[L^p]{\psi_0}^{-\frac{\alpha p}{(p-1)d}} + {\textstyle \frac{\alpha S_{\alpha/2}(\R^d)}{(p-1)d}} \cdot s
\right)^{-\frac{(p-1)d}{\alpha}}.
\]
Using the atom property $\norme[L^p]{\psi_0} \leq A r^{-d(1-\frac{1}{p})}$ and
rearranging the terms, one gets:
\[
\norme[L^p]{\psi(s)} \leq 
A r^{-d(1-\frac{1}{p})} \left(
1 + {\textstyle \frac{\alpha S_{\alpha/2}(\R^d)}{(p-1)d}}\cdot A^{\frac{\alpha p}{(p-1)d}} r^{-\alpha}  s
\right)^{-\frac{d}{\alpha}(1-\frac{1}{p})}
\]
\ie $\norme[L^p]{\psi(s)} \leq A r(s)^{-d(1-\frac{1}{p})}$ with
\begin{equation}
r(s) = \left( r^{\alpha}  + C_{d,p,\alpha} A^\mu  s \right)^{1/\alpha}
\quad\text{and}\quad
\mu = \frac{\alpha}{d(1-\frac{1}{p})} \cdotp
\end{equation}
One chooses $\delta>0$ small enough, then
\begin{equation}\label{largeA1}
0<K<C_{d,p,\alpha} A^\mu - \frac{2\delta}{\frac{d}{\alpha}\left(1-\frac{1}{p}\right)}\cdotp
\end{equation}
Thanks to the reversed Bernoulli inequality $(1-x)^{-1/\beta}\leq 1+2x/\beta$ for $\beta>1$ and $x\in[0,1/2]$, this choice ensures that
\[
\forall t\in [0,\gamma],\qquad
\frac{1+ C_{d,p,\alpha} A^\mu t}{1+ K t}
\geq  1  + \frac{2\delta t}{\frac{d}{\alpha}{(1-\frac{1}{p})}}
\geq (1-\delta t)^{-1/[\frac{d}{\alpha}(1-\frac{1}{p})]}
\]
with $\gamma = \frac{\frac{d}{\alpha}(1-\frac{1}{p})}{2\delta}\left(
\frac{ C_{d,p,\alpha} A^\mu - 2\delta / [ \frac{d}{\alpha}(1-\frac{1}{p}) ]}{K} - 1
\right)$
and thus, after substituting $t=s/r^\alpha$:
\begin{equation}
\norme[L^p]{\psi(s)}
\leq A \left( 1-\frac{\delta s}{r^\alpha} \right) (r^\alpha+K s)^{-\frac{d}{\alpha}(1-\frac{1}{p})}
\end{equation}
for $s\in[0,\gamma r^\alpha]$.

\paragraph{Step 3. Propagation of the concentration.}
With $x(s)$ defined by~\eqref{trackCenter}, one considers
\begin{equation}
\chi(s) =  \int_{\R^d}\psi(s,x) \Omega(x-x(s)) dx .
\end{equation}
Using the equation~\eqref{TDalphaCons} and the fact that $(-\Delta)^{\alpha/2}$ is self-adjoint, the derivative of $\chi$ satisfies:
\begin{equation}\label{eqHprime}
\chi'(s)=\int_{\R^d} (\mathbf{v}-\bar{\mathbf{v}}_{\ball{x(s)}{r}} ) \cdot \nabla \Omega(x-x(s)) \psi(s,x)dx 
-\int_{\R^d} \psi(s,x) \cdot (-\Delta)^{\alpha/2}\Omega(x-x(s))  dx.
\end{equation}
Let us collect obvious estimates for the derivatives of $\Omega$:
\begin{subequations}\label{omegaEstimates}
\begin{gather}
| \nabla\Omega (z)| \lesssim |z|^{-(1-\omega)} \cdot \mathbf{1}_{\ball{0}{2}}(z),\\
\label{OmegaSecondEstimate}
| (-\Delta)^{\alpha/2}\Omega(z) | \lesssim 
 |z|^{-(\alpha-\omega)_+} \cdot \mathbf{1}_{\ball{0}{2}}(z) +
 |z|^{-2-\alpha} \cdotp  \mathbf{1}_{\ball{0}{2}^c}(z).
\end{gather}
\end{subequations}
They follow easily from the scaling properties of the Fourier transform (and thus of $(-\Delta)^{\alpha/2}$) and from the kernel representation~\eqref{kernelPseudoLaplace}. Recall that we assume $\omega<\min\{\alpha,1\}$ throughout the proof.

\subparagraph{3a. Transport term in $\chi'$.}
Let us introduce $E_k(s):=\{x\in \R^d: |x-x(s)|\in [2^{k-1}r,2^kr)\}$ to estimate
\[
I_1 =\left| \int_{\T^d} (\mathbf{v}-\bar{\mathbf{v}}_{\ball{x(s)}{r}})\cdot \nabla\Omega(x-x(s))\psi(s,x) dx \right|.
\]
One has $I_1 \lesssim J_0 + \sum\limits_{k=1}^\infty J_k$  with
\[
J_0 = \int_{\ball{x(s)}{r}}|\mathbf{v}-\bar{\mathbf{v}}_{\ball{x(s)}{r}}| |x-x(s)|^{-(1-\omega)} |\psi| 
\]
and
\[
J_k = \left( \int_{E_k(s)}|\mathbf{v}-\bar{\mathbf{v}}_{\ball{x(s)}{r}}| |\psi | \right)r^{-(1-\omega)} 2^{-k(1-\omega)}.
\]
For $J_0$, we use the H\"older inequality with $a^{-1}+b^{-1}+c^{-1}=1$ and the BMO property
\begin{align*}
J_0 &\leq \|\mathbf{v}-\bar{\mathbf{v}}_{\ball{x(s)}{r}}\|_{L^a(\ball{x(s)}{r})} \|\psi_0\|_{L^b} \||x-x(s)|^{-(1-\omega)}\|_{L^c(\ball{x(s)}{r})} \\
& \lesssim \norme[\BMO]{\mathbf{v}} \cdot r^{d/a} \cdot A^{b_\ast} r^{-d(1-\frac{1}{b})} \cdot r^{\frac{d}{c}-(1-\omega)}\\
& \lesssim r^{-(1-\omega)} A^{b_\ast} \norme[\BMO]{\mathbf{v}}
\end{align*}
with
\begin{equation}
b_\ast = \frac{1-\frac{1}{b}}{1-\frac{1}{p}} = \frac{p'}{b'}\cdotp
\end{equation}
Here, one should comment on the choice of the powers $a,b,c$.
Obviously, we have to take $c<d/(1-\omega)$ for local integrability reasons.
Second, as we used the decay of the $L^b$ norm of $\psi$ given by proposition~\ref{propagationLp}
followed by proposition~\ref{AtomL1LpBound} on $\psi_0$, one needs $p\geq b>d/(d-(1-\omega))$.
Since $a$ can be chosen arbitrary large, it is always possible to find a proper triplet $(a,b,c)$ as soon as
\begin{equation}\label{restrictP}
p > \frac{d}{d-(1-\omega)}\cdotp
\end{equation}
For $J_k$ with $k\geq 1$, we apply the H\"older inequality with a pair of conjugate powers $q_1$ and $q'_1$,
with $q_1>d/(1-\omega)$. Thanks to~\eqref{restrictP},  one thus has $q'_1<\frac{d}{d-(1-\omega)}<p$,
which ensures again that we have propagation of the $L^{q'_1}$ norm of $\psi$ and
that proposition~\ref{AtomL1LpBound} may be used liberally on $\psi_0$.
One also uses that for $\BMO$ functions, the averages of adjacent dyadic balls are comparable
and that $\norme[L^1(E_k)]{\psi} \leq 2^{kd/q_1} \norme[L^{q'_1}(E_k)]{\psi}$
uniformly for $r\in (0,1]$. One thus gets:
\begin{align*}
 \int_{E_k(s)}|\mathbf{v}-\bar{\mathbf{v}}_{\ball{x(s)}{r}}| |\psi | 
 &\leq
 \int_{E_k(s)}|\mathbf{v}-\bar{\mathbf{v}}_{\ball{x(s)}{2^kr}}| |\psi |
 + \int_{E_k(s)}|\bar{\mathbf{v}}_{\ball{x(s)}{2^kr}}-\bar{\mathbf{v}}_{\ball{x(s)}{r}}| |\psi | \\
& \lesssim
\norme[ L^{q_1}({\ball{x(s)}{2^k r}}) ]{ \mathbf{v}-\bar{\mathbf{v}}_{\ball{x(s)}{2^k r}} }
\norme[L^{q'_1}]{\psi_0}
 + k  \norme[\BMO]{\mathbf{v}}  \norme[L^1(E_k)]{\psi} \\[2pt]
&\lesssim
(1+k) 2^{kd/q_1} A^{p'/q_1}
\norme[\BMO]{\mathbf{v}} .
\end{align*}
As we choose $d/q_1<1-\omega$, the geometric series in $k\geq1$ is convergent and $p'/q_1<b_\ast$,  and thus:
\begin{equation}\label{estimateI1}
I_1\lesssim r^{-(1-\omega)}  A^{b_\ast} 
\norme[\BMO]{\mathbf{v}}.
\end{equation}
Let us observe that, due to the admissible range for $b$, the value of $b_\ast$ can be chosen arbitrarily within the interval
\[
\frac{1-\omega}{d(1-\frac{1}{p})} < b_\ast \leq 1.
\]
In the next part of this proof, we will chose $b_\ast$ to be as close as possible to the lowest bound.

\begin{rmk}
As $\operatorname{supp} \nabla \Omega \subset \ball{0}{2}$ the series of $J_k$ terms is limited to $k\lesssim |\log r|$.
However, this upper bound becomes arbitrarily large when $r\to0$. Our previous estimate is uniform for $r\in(0,1]$.
\end{rmk}

\subparagraph{3b. Nonlocal viscous term in $\chi'$.}
Let us now consider the second term of~\eqref{eqHprime}:
\[
I_2=\biggl|\int_{\R^d} \psi(s,x) \cdot (-\Delta)^{\alpha/2} \Omega(x-x(s)) dx\biggr|.
\]
Recall that we assume $\alpha>\omega$.
Thanks to~\eqref{OmegaSecondEstimate}, for any $0< \rho\leq r<1$, one has:
\[
I_2 \lesssim
\int_{\ball{x(s)}{\rho}}|x-x(s)|^{-(\alpha-\omega)} |\psi(s,x)| dx
+ \rho^{-(\alpha-\omega)} \norme[L^1]{\psi_0}.
\]
We apply the H\"older inequality with another pair of conjugate powers $q_2$ and $q'_2$, with $1\leq q_2<\frac{d}{\alpha-\omega}$,
which is always possible.
One also needs $q_2' \leq p$ to ensure the propagation of the $L^{q_2'}$ norm by proposition~\ref{propagationLp},
\ie $\frac{1}{q_2}\leq 1-\frac{1}{p}$. Such a choice is possible if
\begin{equation}\label{restrictP2}
p > \frac{d}{d-(\alpha-\omega)}\cdotp
\end{equation}
As $\alpha\geq1$, this restriction on $p$ supersedes~\eqref{restrictP}.
One gets
\[
I_2 \leq 
\norme[ L^{q_2}(\ball{x(s)}{\rho}) ]{ |x-x(s)|^{-(\alpha-\omega)}  }
\norme[ L^{q_2'} ]{\psi_0}
+ \rho^{-(\alpha-\omega)} \norme[L^1]{\psi_0} 
\]
\ie
\[
I_2 \lesssim 
\rho^{\frac{d}{q_2} - (\alpha-\omega)} A^{p'/q_2} r^{-d/q_2}
+ \rho^{-(\alpha-\omega)}.
\]
The optimal choice for $\rho$ is given by $\rho = r A^{-p'/d}$,
which belongs indeed to $(0,r]$ as $A\gg1$.  Substituting this value in the previous estimate of $I_2$ gives:
\begin{equation}\label{estimateI2}
I_2 \lesssim r^{-(\alpha-\omega)} A^{\mu_\ast}
\qquad\text{with}\qquad \mu_\ast = \frac{\alpha-\omega}{d(1-\frac{1}{p})} \cdotp
\end{equation}

\subparagraph{3c. Conclusion.}

Putting together~\eqref{eqHprime} with \eqref{estimateI1} and \eqref{estimateI2}, one gets:
\[
|\chi'(s)|\leq I_1+I_2 \lesssim 
r^{-(1-\omega)} 
A^{b_\ast}
\norme[\BMO]{\mathbf{v}} 
+ r^{-(\alpha-\omega)} A^{\mu_\ast}.
\]
After integration and considering that $\chi(0)\leq r^{\omega}$ and $1\leq r^{-1}\leq r^{-\alpha}$ because $\alpha\geq1$:
\[
\chi(s) \leq r^{\omega}\left( 1 + C_{d,p,\alpha}'
\left[
A^{b_\ast} \norme[\BMO]{\mathbf{v}} 
+ A^{\mu_\ast}
\right] \frac{s}{r^{\alpha}}  \right).
\]
Provided $A$ is large enough, one may amend the previous choice~\eqref{largeA1} of $\delta$ and $K$ to ensure that
\begin{equation}\label{largeA2}
K \geq \frac{\alpha}{\omega} \left\{ \delta + C_{d,p,\alpha}' \left[ A^{b_\ast}
\norme[\BMO]{\mathbf{v}} 
+ A^{\mu_\ast}
\right]
\right\},
\end{equation}
which, in turn, ensures that
\[
\forall t\in[0,\gamma],\qquad
1 + C_{d,p,\alpha}'
\left[
A^{b_\ast}
\norme[\BMO]{\mathbf{v}} 
+ A^{\mu_\ast}
\right] t 
\leq 
(1-\delta t)(1+Kt)^{\omega/\alpha}
\]
\ie, with $t=s/r^\alpha$:
\begin{equation}
\forall s\in[0,\gamma r^{\alpha}],\qquad
\chi(s) \leq \left(1-\frac{\delta s}{r^\alpha}\right) (r^\alpha+K s)^{\omega/\alpha}.
\end{equation}
This concludes the proof of $\psi(s)\in \left(1-\frac{\delta s}{r^{\alpha}}\right) \atom^p_{(r^\alpha+Ks)^{1/\alpha}}$.
\cqfd\end{proof}

\begin{rmk}\label{choiceOfA}
Our conditions~\eqref{largeA1} and~\eqref{largeA2} generalize respectively the conditions 4.6 - 4.15 of~\cite{KN2009}.
Both conditions are compatible, provided $A$ is chosen large enough, because
\[
\mu > b_\ast \vee \mu_\ast.
\]
In turn, this condition is satisfied by choosing $b_\ast$ as small as possible and because $\alpha>\omega \vee (1-\omega)$.
\end{rmk}

\begin{rmk}\label{troubleWithL1}
It could be tempting to discard the $L^1$-property from the atom definition and use proposition~\ref{AtomL1LpBound2} from the appendix
to control this norm a-posteriori.
In this case, instead of~\eqref{largeA1} and~\eqref{largeA2}, one is led to choose $\delta$ and $K$ such that
\[
\frac{\alpha}{\omega} \left\{ \delta + C_{d,p,\alpha}' \left[ A^{\tilde{b}}
\norme[\BMO]{\mathbf{v}} 
+ A^{\tilde\mu}
\right]
\right\} \leq
 K<C_{d,p,\alpha} A^{\tilde\mu} - \frac{2\delta}{\frac{d}{\alpha}\left(1-\frac{1}{p}\right)}
\]
with $\tilde{b} = \frac{\omega+d(1-\frac{1}{b})}{\omega+d(1-\frac{1}{p})} \in (\tilde\mu/\alpha,1]$ and $\tilde\mu = \frac{\alpha}{\omega+d(1-\frac{1}{p})}$.
As both sides are order $A^{\tilde\mu}$, it is not clear anymore that the choice can be resolved for some
large value of $A$. This alternate path is thus a subtle deadlock. 
\end{rmk}

%%\begin{rmk}
%%In the last part of the proof, we did not re-use the strict decay of $\norme[L^p]{\psi(s)}$ that was collected
%%earlier; we relied only on $\norme[L^q]{\psi(s)}\leq \norme[L^q]{\psi_0}$ for $q\leq p$, then we invoked the atom
%%property of $\psi_0$. There is therefore a small margin left, if the method needed to be pushed further:
%%\[
%%\norme[L^q]{\psi(s)} \leq C_{d,p,q} 
%%\left( 1-\frac{\delta s}{r^\alpha} \right)^{\frac{1-\frac{1}{q}}{1-\frac{1}{p}}}
%%A^{\frac{\omega+d(1-\frac{1}{q})}{\omega+d(1-\frac{1}{p})}}
%%(r^\alpha+Ks)^{-\frac{d}{\alpha}(1-\frac{1}{q})}
%%\qquad (1\leq q\leq p).
%%\]
%%\end{rmk}

\subsection{Global propagation}

\begin{prop}\label{PropagationAtomGlobal}
In the conditions of Proposition \ref{PropagationAtom}, the constants $\delta$, $K$ are such that
\begin{equation}
\psi_0\in \atom^p_{r}
\qquad\Longrightarrow\qquad
\forall s>0,\qquad
\psi(s,\cdot)\in \left(\frac{r^\alpha}{r^\alpha+Ks}\right)^{\delta /K} \atom_{(r^\alpha+Ks)^{1/\alpha}}
\end{equation}
where $\psi$ denotes the solution of the Cauchy problem~\eqref{TDalphaCons}.
\end{prop}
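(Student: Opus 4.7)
The strategy is to iterate Proposition~\ref{PropagationAtom} along a well-chosen sequence of time increments adapted to the growing atomic radius, and then to recognise the resulting product of amplitude factors as the exponential of a left Riemann sum, which is compared to a logarithmic integral.

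Set $s_0=0$, $r_0=r$, and define recursively $s_{n+1}=s_n+\gamma\, r_n^\alpha$ with $r_n=(r^\alpha+Ks_n)^{1/\alpha}$, so that the telescoping identity $r_n^\alpha+K(s-s_n)=r^\alpha+Ks$ holds. By linearity of~\eqref{TDalphaCons} and induction on $n$, applying Proposition~\ref{PropagationAtom} to the rescaled datum $\psi(s_n)/\Lambda_n$ on each sub-interval $[s_n,s_{n+1}]$ yields
\[
\psi(s_n,\cdot)\in \Lambda_n\,\atom^p_{r_n},\qquad \Lambda_n=(1-\delta\gamma)^n.
\]
For an arbitrary intermediate time $s\in[s_n,s_{n+1}]$, one more application of Proposition~\ref{PropagationAtom} over the sub-interval $[s_n,s]$ (whose length does not exceed $\gamma r_n^\alpha$) gives
\[
\psi(s,\cdot)\in \Lambda(s)\,\atom^p_{(r^\alpha+Ks)^{1/\alpha}},\qquad \Lambda(s):=\Lambda_n\Bigl(1-\tfrac{\delta(s-s_n)}{r_n^\alpha}\Bigr).
\]

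It remains to bound the amplitude $\Lambda(s)$. Applying $\ln(1-x)\le -x$ to each factor,
\[
\ln \Lambda(s) \le -\delta\left(\sum_{k=0}^{n-1}\frac{s_{k+1}-s_k}{r_k^\alpha}+\frac{s-s_n}{r_n^\alpha}\right).
\]
The parenthesised quantity is precisely the left Riemann sum of the decreasing function $\tau\mapsto (r^\alpha+K\tau)^{-1}$ associated with the partition $0=s_0<s_1<\dots<s_n<s$ of $[0,s]$, and therefore overestimates the corresponding integral. Consequently,
\[
\ln\Lambda(s)\le -\delta\int_0^s\frac{d\tau}{r^\alpha+K\tau}=-\frac{\delta}{K}\ln\!\frac{r^\alpha+Ks}{r^\alpha},
\]
which yields exactly $\Lambda(s)\le (r^\alpha/(r^\alpha+Ks))^{\delta/K}$, as required.

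The construction poses no real obstacle: the constants $\delta,K,\gamma$ furnished by Proposition~\ref{PropagationAtom} depend only on $d,p,\alpha$ and $\|\mathbf{v}\|_{\BMO}$ (and not on the base radius), so the local step can be invoked uniformly at each iteration. The only point requiring mild attention is to verify that the partition $(s_k)$ exhausts $[0,\infty)$, which is clear since $s_{n+1}-s_n=\gamma r_n^\alpha\ge \gamma r^\alpha(1+K\gamma)^n \to \infty$; this ensures that any prescribed $s>0$ falls into some interval $[s_n,s_{n+1}]$ and the argument above applies.
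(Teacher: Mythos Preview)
Your proof is correct, but it takes a somewhat different route from the paper's. The paper partitions $[0,\infty)$ into intervals of \emph{fixed} length $\gamma r^\alpha$ (based on the initial radius) and, at each step, immediately converts the linear factor $1-\delta S/R^\alpha$ into the power $(1+KS/R^\alpha)^{-\delta/K}$ via the convexity inequality $(1+x)^{-a}\geq 1-ax$; the resulting product then telescopes \emph{exactly} thanks to the identity $(1+K\ell\gamma)(1+KS/R^\alpha)=1+Ks/r^\alpha$, so the desired bound drops out by a clean induction. You instead use the adapted (growing) partition $s_{n+1}-s_n=\gamma r_n^\alpha$, accumulate the raw linear factors $(1-\delta\gamma)^n$, pass to logarithms, and identify the result as a left Riemann sum of the decreasing integrand $(r^\alpha+K\tau)^{-1}$. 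Both arguments are short and rigorous; the paper's exact telescoping is slightly slicker and avoids the Riemann-sum detour, while your approach is arguably more robust in that it does not rely on a fortunate algebraic cancellation and would still work if the local-in-time amplitude loss were not precisely of the form $1-\delta s/r^\alpha$.
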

\begin{proof} 
We keep the assumptions and notations of proposition \ref{PropagationAtom}.
Let us split the time-line in consecutive intervals $[\ell \gamma r^\alpha, (\ell+1)\gamma r^\alpha]$ with $\ell\in\N$.
For $\ell=0$, one has
\[
\forall s\in [0,\gamma r^\alpha],\qquad
1-\frac{\delta s}{r^{\alpha}} \leq \left(1+\frac{Ks}{r^\alpha}\right)^{-\delta /K} 
 = \left(\frac{r^\alpha}{r^\alpha+Ks}\right)^{\delta /K}.
\]
Let us assume that, for some integer $\ell\in\N$, one has:
\[
\psi(\ell \gamma r^\alpha,\cdot) \in
(1+K\ell \gamma)^{-\delta/K}
\atom_{r(1+K \ell \gamma)^{1/\alpha}}.
\]
Then for any $s\in [\ell \gamma r^\alpha, (\ell+1)\gamma r^\alpha]$, proposition \ref{PropagationAtom} gives:
%\[
%\psi(s,\cdot) \in
%(1+K\ell \gamma)^{-\delta/K} \left(1-\frac{\delta S}{R^\alpha}\right)^{}
%\atom_{(R^\alpha+K S)^{1/\alpha}}
%\]
\[
\psi(s,\cdot) \in
(1+K\ell \gamma)^{-\delta/K} \left(1+\frac{KS}{R^\alpha}\right)^{-\delta /K} 
\atom_{(R^\alpha+K S)^{1/\alpha}}
\]
with $S = s-\ell \gamma r^\alpha$ and $R=r(1+K \ell \gamma)^{1/\alpha}$.
The new radius is an exact match:
\begin{equation}\label{radiusMatch}
(R^\alpha+K S)^{1/\alpha} = (r^\alpha + Ks)^{1/\alpha}.
\end{equation}
Similarly, the amplitude satisfies:
\[
(1+K\ell \gamma) \left(1+\frac{KS}{R^\alpha}\right) = 1 + \frac{Ks}{r^\alpha} \cdotp
\]
The proposition thus follows by induction on $\ell\in\N$.
\cqfd\end{proof}

\subsection{Modifications in the case $0<\alpha<1$}

Throughout \S\ref{par:propAtom}, the assumption $\alpha\geq1$ has only been used in the third step of the proof of proposition~\ref{PropagationAtom},
\ie to ascertain the propagation of the concentration. Let us investigate in this subsection how to deal with the case $0<\alpha<1$.

\bigskip
When dealing with the super-critical case, L.~Silvestre \cite{Silv2010} assumes a higher regularity for the advection field.
We will do the same here and assume respectively that $v\in C^{1-\alpha}(\R^d)$ or $C^{1-\alpha}(\T^d)$.
The identity~\eqref{eqHprime} still holds. To deal with the transport term, one uses
\[
\left|\mathbf{v}(x)-\bar{\mathbf{v}}_{\ball{x(s)}{r}}\right| \leq \frac{1}{|\ball{x(s)}{r}|}\int_{\ball{x(s)}{r}} |\mathbf{v}(x) - \mathbf{v}(y)| dy
\leq  \norme[C^{1-\alpha}]{\mathbf{v}}
\:\rule{10pt}{1pt}\hspace{-12pt}\int_{\ball{x(s)}{r}}|x-y|^{1-\alpha} dy,
\]
which gives
\begin{equation}\label{improvedVminusVBar}
\left|\mathbf{v}(x)-\bar{\mathbf{v}}_{\ball{x(s)}{r}}\right| \leq
(|x-x(s)|+r)^{1-\alpha} \norme[C^{1-\alpha}]{\mathbf{v}}.
\end{equation}
For $J_0$, one takes $a=\infty$ and the same constraints for the exponents $b$ and $c$, thus
\[
J_0\lesssim r^{-(\alpha-\omega)}  A^{b_\ast} \norme[C^{1-\alpha}]{\mathbf{v}}.
\]
Similarly, for $J_k$, one takes $q_1=\infty$:
\[
 \int_{E_k(s)}|\mathbf{v}-\bar{\mathbf{v}}_{\ball{x(s)}{r}}| |\psi | 
 \lesssim 2^{k(1-\alpha)} r^{1-\alpha} \norme[C^{1-\alpha}]{\mathbf{v}}
\]
thus $J_k\lesssim 2^{-k(\alpha-\omega)} r^{-(\alpha-\omega)} \norme[C^{1-\alpha}]{\mathbf{v}}$
and as $\omega<\alpha$, the geometric series in $k$ is convergent. The estimate~\eqref{estimateI1} can therefore be replaced by
\begin{equation}\label{estimateI1bis}
I_1\lesssim r^{-(\alpha-\omega)}  A^{b_\ast} 
\norme[C^{1-\alpha}]{\mathbf{v}}
\qquad\text{with}\quad
b_\ast = \frac{1-\omega}{d(1-\frac{1}{p})} + \varepsilon, \quad \varepsilon>0.
\end{equation}
For the non-local viscous term, the estimate~\eqref{estimateI2} remains valid. The sole difference is that now
\[
\frac{d}{d-(1-\omega)} > \frac{d}{d-(\alpha-\omega)}
\]
and, consequently, the requirement \eqref{restrictP} trumps \eqref{restrictP2}.

\bigskip
Putting together~\eqref{eqHprime} with \eqref{estimateI1bis} and \eqref{estimateI2}, one gets:
\[
|\chi'(s)| \lesssim r^{-(\alpha-\omega)} \left( A^{b_\ast} 
\norme[C^{1-\alpha}]{\mathbf{v}}  + A^{\mu_\ast}\right)
\]
and
\[
\chi(s) \leq r^{\omega}\left( 1 + C_{d,p,\alpha}'
\left[
A^{b_\ast} \norme[C^{1-\alpha}]{\mathbf{v}} 
+ A^{\mu_\ast}
\right] \frac{s}{r^{\alpha}}  \right).
\]
The conclusion is identical, provided that $A$ is large enough and that the choice of $K$ and $\delta$ ensures
\begin{equation}\label{largeA2bis}
K \geq \frac{\alpha}{\omega} \left\{ \delta + C_{d,p,\alpha}' \left[ A^{b_\ast}
\norme[C^{1-\alpha}]{\mathbf{v}} 
+ A^{\mu_\ast}
\right]
\right\}
\end{equation}
instead of~\eqref{largeA2}.
Note that to reconcile \eqref{largeA2bis} with \eqref{largeA1} for large $A$, one needs  $\alpha>\omega \vee (1-\omega)$,
which is always possible if $\alpha>1/2$. However, when $\alpha\leq 1/2$, one needs one final modification, which is to replace
the average~$\bar{\mathbf{v}}_{\ball{x(s)}{r}}$ by the point-wise value $\mathbf{v}(x(s))$, where:
\begin{equation}\label{trackCenterBis}
\begin{cases}
x^\prime(s)=\mathbf{v}(x(s)),\\
x(0) = x_0.
\end{cases}
\end{equation}
In this case, estimate~\eqref{improvedVminusVBar}
is improved one step further into the following one:
\begin{equation}
\left|\mathbf{v}(x)-\mathbf{v}(x(s))\right| \leq
|x-x(s)|^{1-\alpha} \norme[C^{1-\alpha}]{\mathbf{v}}.
\end{equation}
This changes $J_0$ into
\[
\widetilde{J_0} = \int_{\ball{x(s)}{r}} |x-x(s)|^{-(\alpha-\omega)} |\psi|,
\]
which is then estimated in an identical manner  to $I_2$. Note that this modification also allows us to drop all requirements
concerning $b_\ast$ and in particular~\eqref{restrictP}, which is beneficial for any $\alpha\in(0,1)$.
Let us finally point out that, in the other parts of the proof, the requirement $d>\alpha$ now allows for any dimension $d\geq1$.
We have thus established the following statement:
\begin{prop}\label{PropagationAtomBis}
Let us assume that $0<\alpha<1$ and $d\geq 1$ and that the velocity field $\mathbf{v} \in C^{1-\alpha}$ satisfies
\begin{equation}\label{smallnessAssumption2}
(p-1) \norme[L^\infty_t L^{d/\alpha}_x]{(\div\mathbf{v})_-}  \leq S_{\alpha/2}(\R^d)
\end{equation}
for some $p=2^n$ with $n\in\N$ such that
\[
p > \frac{d}{d-(\alpha-\omega)} \qquad\text{with}\qquad
0<\omega<\alpha.
\]
Then there exist constants $\delta, K$ and $\gamma$ (and a lower threshold for $A$), depending only on $d$, $p$, $\alpha$
and~$\|\mathbf{v}\|_{C^{1-\alpha}}$, such that for all $r\in (0,1]$, the following implication holds:
\begin{equation}
\psi_0\in \atom^p_{r}
\qquad\Longrightarrow\qquad
\forall s>0,\qquad
\psi(s,\cdot)\in
\left(1+\frac{Ks}{r^\alpha}\right)^{-\delta /K} 
 \atom_{(r^\alpha+Ks)^{1/\alpha}}
\end{equation}
where $\psi$ denotes the solution of the Cauchy problem~\eqref{TDalphaCons}.
\end{prop}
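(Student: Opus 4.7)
The plan is to retrace the three-step structure of the proof of Proposition~\ref{PropagationAtom}, keeping Steps~1 (strict decay of $\norme[L^1]{\psi}$) and~2 (strict decay of $\norme[L^p]{\psi}$) essentially verbatim. Those two steps only rely on the conservative form of~\eqref{TDalphaCons}, on the propagation of positivity, and on the Lebesgue bound of Proposition~\ref{propagationLp}; all of these remain valid for $0<\alpha<1$ under the smallness assumption~\eqref{smallnessAssumption2}, which is why it is restated identically. In particular, Step~2 still delivers $\norme[L^p]{\psi(s)}\leq A(1-\delta s/r^\alpha)(r^\alpha+Ks)^{-\frac{d}{\alpha}(1-1/p)}$ on $[0,\gamma r^\alpha]$ as soon as $K$ satisfies~\eqref{largeA1}.

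All the work is thus concentrated in Step~3 (propagation of the concentration), where $\alpha\geq 1$ was previously used in a crucial way through the BMO control on $\mathbf{v}-\bar{\mathbf{v}}_{\ball{x(s)}{r}}$. My plan is to exploit the stronger H\"older regularity of~$\mathbf{v}$ via the pointwise bound~\eqref{improvedVminusVBar}. Applied to the decomposition $I_1\lesssim J_0+\sum_k J_k$ of the transport contribution to~\eqref{eqHprime}, it allows the choice $a=\infty$ in $J_0$ and $q_1=\infty$ in each $J_k$; the geometric series in $k$ then converges because $\omega<\alpha$, yielding~\eqref{estimateI1bis}. The viscous-term estimate~\eqref{estimateI2} is insensitive to the value of $\alpha$ and carries over unchanged. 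Combining the two estimates, integrating in time, and matching the amplitude gain from Step~2 leads to the requirement~\eqref{largeA2bis} on~$K$.

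The main obstacle is the compatibility of~\eqref{largeA1} with~\eqref{largeA2bis} for large~$A$, which, as in remark~\ref{choiceOfA}, reduces to $\alpha>\omega\vee(1-\omega)$. For $\alpha>1/2$ this is achievable by a suitable choice of $\omega$, but for $\alpha\leq 1/2$ it fails: the transport term is then too singular at small scales to be absorbed against the viscous one. In that case I will replace the tracking ODE~\eqref{trackCenter} by its pointwise version~\eqref{trackCenterBis}, which gives the sharper bound $|\mathbf{v}(x)-\mathbf{v}(x(s))|\leq|x-x(s)|^{1-\alpha}\norme[C^{1-\alpha}]{\mathbf{v}}$. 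The corresponding $\widetilde{J_0}$ becomes structurally identical to the viscous term $I_2$ and can be estimated by the same argument, with the same optimal choice of the cutoff radius $\rho=rA^{-p'/d}$. This removes the constraint~\eqref{restrictP} on $p$ entirely --- only~\eqref{restrictP2} remains, as stated in the proposition --- and allows dimensions $d\geq 1$ via $d>\alpha$.

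Finally, once the local propagation over $[0,\gamma r^\alpha]$ is secured with amplitude decrement $(1-\delta s/r^\alpha)$ and radius growth $(r^\alpha+Ks)^{1/\alpha}$, the global statement $\psi(s)\in(1+Ks/r^\alpha)^{-\delta/K}\atom^p_{(r^\alpha+Ks)^{1/\alpha}}$ follows verbatim from the inductive iteration of Proposition~\ref{PropagationAtomGlobal}: one splits time into consecutive intervals of length $\gamma r^\alpha$, updates the atomic radius after each step, uses the exact matching~\eqref{radiusMatch} to preserve the radius growth law across the junctions, and replaces the linear amplitude factor by its exponential majorant. No new idea is needed at this last stage.
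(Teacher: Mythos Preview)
Your proposal is correct and follows the paper's own argument essentially step by step: the same retention of Steps~1--2, the same use of the H\"older bound~\eqref{improvedVminusVBar} with $a=\infty$ and $q_1=\infty$ to obtain~\eqref{estimateI1bis}, the same compatibility analysis leading to $\alpha>\omega\vee(1-\omega)$, the same switch to the pointwise tracking~\eqref{trackCenterBis} when $\alpha\leq 1/2$ so that $\widetilde{J_0}$ is handled like $I_2$, and the same inductive iteration for the global statement. The paper also notes (as you do) that the pointwise tracking drops~\eqref{restrictP} and is in fact advantageous for the whole range $\alpha\in(0,1)$, which is what justifies stating the proposition with only the constraint~\eqref{restrictP2}.
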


\begin{rmk}
Note that we can take $p=2$ in the previous statement (and thus, using remark~\ref{choiceOfp}, claim a similar one in the case of $\T^d$) if
\[
\frac{d}{2} > \alpha-\omega.
\]
Such a choice is always possible.
\end{rmk}

\section{Proof of Theorem~\ref{mainThm}}\label{par:proof}

The proof of theorem~\ref{mainThm} is now straightforward.

\bigskip
Given  $d\geq2$ and $1\leq \alpha\leq 2$ (with $d\geq 3$ when $\alpha=2$), one chooses $\omega\in(0,1)$ such that $\alpha-1 < \omega<1$ if $\alpha<2$,
or $\omega>1/2$ if $\alpha=2$.
One checks immediately that $\omega<\alpha<d$ and $d>2(\alpha-\omega)$.
Let us now consider an advection vector field $v\in\BMO$ with
\[
\norme[L^{d/\alpha}]{(\div v)_-} \leq S_{\alpha/2}.
\]
One takes $p=2$.
One chooses the constant $A$, which is implicit in the definition of atoms, according to the threshold mentioned in remark~\ref{choiceOfA};
this threshold depends solely on $d$, $\alpha$ and $\norme[\BMO]{v}$. 
One considers the constants $\gamma$, $\delta$ and $K$ given by  propositions~\ref{PropagationAtom} and~\ref{PropagationAtomGlobal}
and sets
\[
\beta = \alpha\delta/K.
\]
The value of $\beta$ depends on $d$, $\alpha$ and $\norme[\BMO]{v}$. 

\bigskip
For $d\geq 1$ and $0<\alpha<1$, one chooses $\omega$ such that $(\alpha-\frac{d}{2})_+<\omega<\alpha$ and $p=2$. In this case,
the~$\BMO$ norm is replaced by the $C^{1-\alpha}$-norm in all computations.

\begin{rmk}\label{relax}
When $\norme[L^{d/\alpha}]{(\div v)_-} < 2 S_{\alpha/2}$,
one can still run the following proof. However, the choice of~$A$ and of all constants then depends not only on 
$\norme[\BMO]{v}$ but also on $C(v)=2 S_{\alpha/2} - \norme[L^{d/\alpha}]{(\div v)_-}>0$
and degenerates as $C(v)\to0$. See remark~\ref{LargerV}.
\end{rmk}

\subsection{Propagation of the H\"older regularity}

For any solution~$\theta$ of~\eqref{TDalpha} stemming from $\theta_0\in C^\beta$ and for $\psi_0\in\atom^2_r$, identity~\eqref{transfert} implies that:
\[
\int_{\R^d} \theta(t,x) \psi_0(x) dx  = \int_{\R^d} \theta_0(x) \psi(t,x) dx
\]
where $\psi$ is solution of the dual equation~\eqref{dualEQ}, which, by proposition~\ref{PropagationAtomGlobal}, is an atom of calibrated size.
Using~\eqref{atomCbeta} for $\theta_0$, one gets:
\[
r^{-\beta} \left| \int_{\R^d} \theta(t,x) \psi_0(x) dx \right|
\lesssim  r^{-\beta}\left(1+\frac{Kt}{r^\alpha}\right)^{-\delta /K}  (r^\alpha+Kt)^{\beta/\alpha}\norme[C^\beta]{\theta_0}  = \norme[C^\beta]{\theta_0}.
\]
A second application of~\eqref{atomCbeta} then ensures that $\theta(t)\in C^\beta$ and that
\[
\norme[C^\beta]{\theta(t)} \leq C \norme[C^\beta]{\theta_0}.
\]
The constant $C$ is the implicit one in~\eqref{atomCbeta}.
\begin{rmk}
The same argument also gives $\norme[C^{\beta'}]{\theta(t)} \leq C \norme[C^{\beta'}]{\theta_0}$ for any $0\leq \beta'\leq \beta$.
\end{rmk}

\subsection{Gain in H\"older regularity}
One can use the H\"older inequality and proposition~\ref{AtomL1LpBound} with $p=2$ to control
\[
\left| \int_{\R^d} \theta_0(x) \psi(t,x) dx \right| \leq 
\norme[L^q]{\theta_0} \norme[L^{q'}]{\psi(t)} \leq 
\frac{ A^{2/q} r^\beta \norme[L^q]{\theta_0} }{ (r^\alpha + K t)^{(\beta+\frac{d}{q})/\alpha} }
\lesssim r^\beta  t^{-(\beta+\frac{d}{q})/\alpha}  \norme[L^q]{\theta_0} 
\] 
for any Lebesgue exponent $q$ such that $2\leq q\leq \infty$.
One thus gets a regularization estimate:
\[
\norme[C^\beta]{\theta(t,\cdot)}
\simeq \sup_{\substack{0<r\leq 1\\ \psi_0\in \atom^2_r}} r^{-\beta} \left| \int_{\R^d} \theta(t,x) \psi_0(x) dx \right| 
\leq C t^{-(\beta+\frac{d}{q})/\alpha}  \norme[L^q]{\theta_0} 
\]
with a constant $C$ that depends on $q$ and $A$ and thus ultimately on $d$, $\alpha$ and $\norme[\BMO]{v}$.

%%%%%%%%%%%%%%%%%%%%%%%%%%%%%%%%%%%

\appendix
\section{Appendix: on the $L^1$-control of atoms}

Even without the a-priori constraint $\norme[L^1]{\psi}\leq 1$,
one can control the $L^1$-norm of atoms (or any $L^q$ norm for $q\leq p$) by a real interpolation estimate.
\begin{prop}\label{AtomL1LpBound2}
If $\varphi$ satisfies
\[
\norme[L^p]{\varphi} \leq A r^{-d(1-\frac{1}{p})},
\qquad\text{and}\qquad
\exists x_0\in\R^d, \quad \int_{\R^d} |\varphi(x)| \Omega(x-x_0) dx \leq r^\omega
\] 
for some $0<r\leq1$ and $p\in (1,\infty]$, then
\begin{equation}\label{AtomL1Bound2}
\norme[L^1]{\varphi} \leq C_{d,p} A^{\omega/(\omega+d(1-\frac{1}{p}))}
\end{equation}
and, more generally, for any $1\leq q\leq p$, one has
\begin{equation}\label{AtomLpBound2}
\norme[L^q]{\varphi} \leq C_{d,p,q} A^{\frac{\omega+d(1-1/q)}{\omega+d(1-1/p)}} r^{-d(1-\frac{1}{q})}.
\end{equation}
\end{prop}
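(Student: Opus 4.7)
The plan is to split the $L^1$-integral into a ball $\ball{x_0}{R}$ around the concentration point, where the $L^p$-bound dominates, and its complement, where the weighted $L^1$-estimate takes over. For the inner part, H\"older's inequality with exponents $p$ and $p'$ gives
\[
\int_{\ball{x_0}{R}} |\varphi| \leq |\ball{x_0}{R}|^{1-1/p} \norme[L^p]{\varphi} \lesssim A \, (R/r)^{d(1-1/p)}.
\]
For the outer part, provided $R \leq 1$, the weight satisfies $\Omega(x-x_0) \geq R^{\omega}$ on $\ball{x_0}{R}^c$, so the concentration hypothesis yields $\int_{\ball{x_0}{R}^c}|\varphi| \leq (r/R)^\omega$.

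The resulting bound $\norme[L^1]{\varphi} \lesssim A(R/r)^{d(1-1/p)} + (r/R)^\omega$ is then optimized in $R$. Setting $s=R/r$ and balancing the two terms, the critical value is $s_\ast \sim A^{-1/(\omega + d(1-1/p))}$, at which point both contributions equal a multiple of $A^{\omega/(\omega+d(1-1/p))}$, which is exactly~\eqref{AtomL1Bound2}. Once~\eqref{AtomL1Bound2} is established, estimate~\eqref{AtomLpBound2} for intermediate $q\in(1,p)$ follows immediately by the standard interpolation inequality $\norme[L^q]{\varphi}\leq \norme[L^1]{\varphi}^{1-\theta}\norme[L^p]{\varphi}^\theta$ with $\theta = (1-1/q)/(1-1/p)$; a short algebraic simplification of the two exponents produces the announced powers of $A$ and $r$.

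The only minor subtlety I expect is bookkeeping around the constraint $R\leq 1$, needed so that $\Omega(x-x_0)\geq R^\omega$ rather than $1$ on the outer region. Since $r\leq 1$ and the optimal $R_\ast = r\, s_\ast$ is even smaller as soon as $A\gtrsim 1$, the regime is admissible in the interesting range; in the complementary regime where $A$ is bounded, the conclusion is trivial by taking $R=r$. No deeper obstacle arises.
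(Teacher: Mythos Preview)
Your argument is correct. For the $L^1$ estimate~\eqref{AtomL1Bound2}, your splitting over $\ball{x_0}{R}$ and its complement, followed by optimization in $R$, is exactly what the paper does (with $\rho$ in place of your $R$). Your handling of the constraint $R\leq 1$ is also in line with the paper, which likewise needs the optimal radius to satisfy $\rho\leq r$, an inequality that holds once $A$ is bounded below by a dimensional constant.

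For the $L^q$ estimate~\eqref{AtomLpBound2}, your route is genuinely simpler than the paper's. You interpolate directly between the $L^1$ bound just obtained and the hypothesis on $\norme[L^p]{\varphi}$, and a short computation confirms that the resulting exponent of $A$ is
\[
(1-\theta)\,\frac{\omega}{\omega+d(1-\tfrac{1}{p})} + \theta
= \frac{\omega + d(1-\tfrac{1}{q})}{\omega + d(1-\tfrac{1}{p})},
\qquad \theta=\frac{1-1/q}{1-1/p},
\]
as claimed. The paper instead repeats the splitting argument from scratch for general $q$: it decomposes $\int|\varphi|^q$ over $\ball{x_0}{\tau}$ and its complement, applies H\"older with exponent $p/q$ on the ball and the $L^1$--$L^p$ interpolation inequality on the complement, and optimizes in $\tau$. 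Both approaches yield the same exponents; yours avoids the second optimization and is more transparent, while the paper's version has the minor advantage of being self-contained (it does not rely on first establishing~\eqref{AtomL1Bound2}).
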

\begin{rmk}
Compared to proposition~\ref{AtomL1LpBound}, these estimates ``lose'' powers of $A$,
which would provoke a critical collision of exponents in the previous proof (see remark~\ref{troubleWithL1}).
\end{rmk}

\begin{proof}
For any $\rho\in [0,r]$, one has
\[
\norme[L^1]{\varphi}
\leq \int_{\ball{x_0}{\rho}} |\varphi| +  \rho^{-\omega}\int_{\R^d\backslash\ball{x_0}{\rho}} |\varphi(x)| \Omega(x-x_0) dx
\leq A \left(\frac{\rho}{r}\right)^{d(1-\frac{1}{p})}  |\ball{0}{1}|^{1-\frac{1}{p}}+ \left( \frac{r}{\rho} \right)^{\omega}
\]
and \eqref{AtomL1Bound2} follows from choosing the optimal value $\rho=r \big( A |\ball{0}{1}|^{1-\frac{1}{p}} \big)^{-1/(\omega+d(1-\frac{1}{p}))}$.
For the second estimate, one proceeds similarly with $\tau\in [0,r]$; for clarity, we do not track the constant related to $|\ball{0}{1}|$:
\begin{align*}
\int_{\R^d} |\varphi|^{q} &\leq \int_{\ball{x_0}{\tau}} |\varphi|^{q}
+  \int_{\R^d \backslash \ball{x_0}{\tau}} |\varphi|^{q} \\
& \lesssim \biggl(\int_{\R^d} |\varphi|^{p}\biggr)^{\frac{q}{p}} \tau^{d(1-\frac{q}{p})}
+ \biggl(\tau^{-\omega}\int_{\R^d \backslash \ball{x_0}{\tau}} |\varphi(x)| \Omega(x-x_0) dx\biggr)^{\frac{p-q}{p-1}}
\biggl(\int_{\R^d} |\varphi|^{p}\biggr)^{\frac{q-1}{p-1}},
\end{align*}
thanks to the H\"older inequality (with $p/q\geq1$) for the first term,
and the interpolation inequality $\norme[L^q]{f}\leq \norme[L^1]{f}^{1-\theta} \norme[L^p]{f}^{\theta}$
with $\theta = (1-\frac{1}{q})/(1-\frac{1}{p}) \in [0,1]$ for the second.
We now use the fact that $\varphi\in \atom^p_r$ and deduce that
\begin{equation*}
\int_{\R^d} |\varphi|^{q}
\lesssim \left(A r^{-d(1-\frac{1}{p})} \right)^{q} \tau^{d(1-\frac{q}{p})} 
+ \biggl(\frac{r}{\tau}\biggr)^{\omega (\frac{p-q}{p-1})} (A^p r^{-(p-1)d})^{\frac{q-1}{p-1}}.
\end{equation*}
The optimal choice for $\tau$ is the one that balances the weight of both terms; it is $\tau=r A^{-p/(d(p-1)+\omega p)}$.
The computation then boils down to
\[
\int_{\R^d} |\varphi|^{q}\lesssim r^{-(q-1)d}A^{\frac{d p (q-1) + \omega pq}{d(p-1)+\omega p}}
\qquad\ie\qquad
\norme[L^q]{\varphi} \lesssim A^{\frac{\omega+d(1-1/q)}{\omega+d(1-1/p)}} r^{-d(1-\frac{1}{q})}
\]
and the lemma is proven.
\cqfd\end{proof}

\end{document}